\newtheorem{prop}{Proposition}[section]
\newtheorem{thm}[prop]{Theorem}
\newtheorem{lem}[prop]{Lemma}
\newtheorem{cor}[prop]{Corollary}
\theoremstyle{definition}
\newtheorem*{defn}{Definition}
\newtheorem*{exs}{Examples}
\newtheorem{rem}[prop]{Remark}
\newtheorem*{ack}{Acknowledgements}
\def\co{\colon\thinspace}
\newcommand{\C}{\mathbb C}
\newcommand{\CP}{\mathbb{C}\mathrm{P}}
\newcommand{\rmd}{\mathrm d}
\newcommand{\rme}{\mathrm e}
\newcommand{\F}{\mathbb F}
\newcommand{\MM}{\mathcal M}
\newcommand{\wtMM}{\widetilde{\mathcal M}}
\newcommand{\N}{\mathbb N}
\newcommand{\R}{\mathbb R}
\newcommand{\wtW}{\widetilde{W}}
\newcommand{\bfx}{\mathbf x}
\newcommand{\bfy}{\mathbf y}
\newcommand{\bfz}{\mathbf z}
\newcommand{\lra}{\longrightarrow}
\newcommand{\ra}{\rightarrow}
\DeclareMathOperator{\Aut}{Aut}
\DeclareMathOperator{\diam}{diam}
\DeclareMathOperator{\dR}{dR}
\DeclareMathOperator{\ev}{ev}
\DeclareMathOperator{\FS}{FS}
\DeclareMathOperator{\id}{id}
\DeclareMathOperator{\Int}{Int}
\DeclareMathOperator{\st}{st}
\newcommand{\xist}{\xi_{\st}}
\begin{document}

\author{Hansj\"org Geiges}
\author{Kai Zehmisch}
\address{Mathematisches Institut, Universit\"at zu K\"oln,
Weyertal 86--90, 50931 K\"oln, Germany}
\email{geiges@math.uni-koeln.de, kai.zehmisch@math.uni-koeln.de}

\title[The strong Weinstein conjecture]{Symplectic cobordisms and
the strong Weinstein~conjecture}

\date{}

\begin{abstract}
We study holomorphic spheres
in certain symplectic cobordisms and derive information
about periodic Reeb orbits in the concave end of these
cobordisms from the non-compactness of the relevant
moduli spaces. We use this to confirm the strong Weinstein conjecture
(predicting the existence of null-homologous Reeb links)
for various higher-dimen\-sio\-nal contact manifolds,
including contact type hypersurfaces in subcritical Stein
manifolds and in some cotangent bundles. The quantitative character
of this result leads to the definition of a symplectic capacity.
\end{abstract}

\subjclass[2010]{53D35; 37C27, 37J45, 57R17}

\maketitle


\section{Introduction\label{intro}}
In a previous paper~\cite{geze} we studied moduli spaces
of holomorphic discs in certain $4$-dimensional
symplectic cobordisms. As in the classical work of
Hofer~\cite{hofe93} on the Weinstein conjecture in dimension~$3$,
the non-compactness of these moduli spaces was used
to detect periodic Reeb orbits. Moreover, this
approach enabled us to give a unified view of many results in
$3$-dimensional contact topology and $4$-dimensional symplectic
topology.

In the present paper we extend this work to higher
dimensions. Using an idea that can be
traced back to McDuff~\cite{mcdu91}, we modify our set-up by
constructing a symplectic cap on the
convex end of the symplectic cobordism. We can then work with
moduli spaces of holomorphic spheres rather than discs,
which allows us to invoke a compactness theorem from symplectic field
theory~\cite{behwz03}.

Our main technical result (Theorem~\ref{thm:main}) makes
quantitative predictions about periodic Reeb orbits
in the concave end of the symplectic cobordisms under consideration.
In Corollary~\ref{cor:main} we rephrase this as
a statement about the Weinstein conjecture~\cite{wein79}
in the strong version proposed by Abbas et al.~\cite{ach05},
which will be recalled in Section~\ref{section:Weinstein}.
We also recover a result of McDuff about a class of
symplectic fillings whose boundary is necessarily connected
(Theorem~\ref{thm:McDuff}).

In Section~\ref{section:applications} we explore various
applications of these results. Our main focus is on
specific instances of the strong Weinstein conjecture
and on quantitative Reeb dynamics. These examples include
contact type hypersurfaces in subcritical Stein manifolds
(Corollary~\ref{cor:Stein}) and in cotangent bundles
over split manifolds $Q\times S^1$ (Corollary~\ref{cor:cotangent}).
As in our previous paper, the quantitative results
give rise to the definition of a symplectic capacity via the periods of
Reeb orbits on contact type hypersurfaces.

A typical application is the Weinstein conjecture for subcritically
Stein fillable contact manifolds (Corollary~\ref{cor:filling}).
This result is complementary to that of Albers--Hofer~\cite{alho09}
on the Weinstein conjecture for higher-dimensional
contact manifolds that are overtwisted in the sense
of Niederkr\"uger~\cite{nied06} and hence, as shown there,
do not admit semipositive
strong symplectic fillings. Albers--Hofer study holomorphic discs
in trivial symplectic cobordisms only, but it seems likely
that this can be extended to the more general cobordisms considered here.
For other recent work on the higher-dimensional Weinstein 
conjecture see~\cite{nire}.

The final two sections contain the proof of the main technical result. In
Section~\ref{section:completing} we describe a completion
of the symplectic cobordism. In Section~\ref{section:moduli}
we introduce certain moduli spaces of holomorphic spheres
in this completed cobordism; the non-compactness of these
moduli spaces implies the main result.

Part of the motivation for this paper comes from the recent article by
Oancea--Viterbo~\cite{oavi} on the topology of symplectic fillings.
At the end of the present paper we briefly indicate
the relation of our results with their work.
\section{The strong Weinstein conjecture}
\label{section:Weinstein}
Let $M$ be a closed $(2n-1)$-dimensional
manifold carrying a (cooriented) contact structure~$\xi$,
i.e.\ a tangent hyperplane field defined as $\xi=\ker\alpha$
for some $1$-form $\alpha$ such that
$\alpha\wedge (\rmd\alpha)^{n-1}$ is a volume form on~$M$.
We equip $M$ with the orientation induced by this
volume form; write $\overline{M}$ for $M$ with the reversed
orientation. The Reeb vector field $R=R_{\alpha}$ of $\alpha$
is defined by the equations $i_R\rmd\alpha=0$ and $\alpha(R)=1$.

Paraphrasing Weinstein~\cite{wein79} we say that
$(M,\xi)$ {\em satisfies the Weinstein conjecture} if for
{\em every\/} contact form $\alpha$ defining $\xi$ the corresponding
Reeb vector field $R_{\alpha}$ has a closed orbit.

When we speak of a contractible periodic orbit, the period
is not required to be the minimal one.

\begin{defn}
A {\bf Reeb link} for a contact form $\alpha$ is
a collection of periodic orbits of~$R_{\alpha}$, not necessarily of
minimal period. Its {\bf total action} is the sum of the periods.
\end{defn}

In other words, when we speak of a Reeb link we allow
the components of this link to be multiply covered. This convention is
important for the following definition.

\begin{defn}
Following Abbas et al.~\cite{ach05} we say that $(M,\xi)$
{\em satisfies the \textbf{strong} Weinstein conjecture\/}
if for every $\alpha$ defining $\xi$ there exists a {\em nullhomologous\/}
Reeb link.
\end{defn}

Here is a simple example of a whole class of contact manifolds
$(M,\alpha)$ admitting nullhomologous Reeb links. Consider a closed
manifold $B$ with a symplectic form~$\omega$ such that
$\omega/2\pi$ represents an integral cohomology class.
Let $\pi\co M\rightarrow B$ be a principal circle bundle
of (real) Euler class $e=-[\omega/2\pi ]$. Then the
connection $1$-form $\alpha$ on this bundle with curvature form~$\omega$,
i.e.\ $\rmd\alpha=\pi^*\omega$, is a contact form on $M$ whose
Reeb orbits are the fibres of the $S^1$-bundle.
When $B$ is a surface, the Euler class may be regarded as an integer,
and the $|e|$-fold multiple of the fibre is nullhomologous in~$M$.
Unless $B$ is a $2$-sphere, no multiple of the fibre will be
nullhomotopic. This follows from the
homotopy exact sequence
\[ \ldots\lra\pi_2(B)\lra\pi_1(S^1)\lra\pi_1(M)\lra\ldots\] 
of the fibration, since surfaces of genus at least one are aspherical.
In the case $\dim B\geq 4$, choose a $2$-dimensional integral homology class
of $M$ on which $e$ evaluates non-trivially. Any such class
can be represented by an embedded surface~$\Sigma$,
see~\cite[Th\'eor\`eme~II.27]{thom54}. Then the $|e(\Sigma)|$-fold
multiple of the fibre will be homologically trivial in $\pi^{-1}(\Sigma)$
and, {\em a fortiori}, in~$M$.
\section{The main technical result}
\label{section:main}
Our main theorem answers the strong Weinstein conjecture in
the affirmative for contact manifolds $(M,\alpha)$ that arise as
the strong concave boundary of a suitable compact symplectic cobordism
$(W,\omega)$ of dimension~$2n$, which we equip with the orientation
given by the volume form~$\omega^{n}$. Throughout this paper it is
understood that $n\geq 2$.

The convex end of $(W,\omega)$ is required to contain
one connected component (or collection of components)
$S$ of the type we describe next.
This class of manifolds includes spheres and ellipsoids with their
standard contact form.
\subsection{The boundary component $S$}
\label{subsection:S}
Let $(P,\omega_P)$ be a compact, connected $(2n-2)$-dimensional symplectic
manifold (with boundary) admitting a strictly plurisubharmonic potential,
by which we mean the following. We require the existence of
an almost complex structure $J_P$ tamed by $\omega_P$, i.e.\
$\omega_P(X,J_PX)>0$ for all non-zero tangent vectors~$X$,
and a smooth function $\psi_P\co P\rightarrow\R$ having the boundary
$\partial P$ as a regular level set and with
\[ \omega_P=-\rmd(\rmd\psi_P\circ J_P).\]

A straightforward calculation, cf.\ \cite[Lemma~4.11.3]{geig08},
shows that the restriction of a (strictly) plurisubharmonic function
to a non-singular holomorphic curve is (strictly) subharmonic,
whence the name. This entails the maximum principle
for such curves (even in the non-strict case).

Given any point of an almost complex manifold and a holomorphic
tangent vector at that point, one can find a local holomorphic curve
passing through that point in the given tangent
direction~\cite[Theorem~III]{niwo63}. Combined with the maximum
principle this implies that $\psi_P$ attains its maximum
on the boundary $\partial P$ only. After changing $\psi_P$ by
an additive constant we may assume $\min\psi_P=0$.

Equip the product $C:=P\times\CP^1$ with the almost complex structure
$J_C:=J_P\oplus i$ and the symplectic
form $\omega_C:=\omega_P+\omega_{\FS}$, where $\omega_{\FS}$
denotes the Fubini--Study form of total integral $\pi$ on $\CP^1$.
Observe that the K\"ahler manifold $(\CP^1,\omega_{\FS})$
may be interpreted as the one-point compactification of the
open unit disc $B_1$ in $\C$ with its standard area form.
On $P\times B_1$ we have the corresponding strictly plurisubharmonic function
$\psi:=\psi_P+|z|^2/4$. We write $\CP^1=B_1\cup\{\infty\}$ and
$P_{\infty}:=P\times\{\infty\}\subset C$.

The manifold $S$ is supposed to be a regular level set $\psi^{-1}(c)$
with $c$ smaller than both $\max\psi_P$ and $1/4$, or a collection
of connected components of such a level set. This choice of $c$
ensures that $S$ is a compact hypersurface in $\Int (P)\times B_1$.
It inherits the contact form
\[ \alpha_S:=-\rmd\psi\circ J_C|_{TS}; \]
the contact structure $\ker\alpha_S$ is given by the
$J_C$-invariant sub-bundle of the tangent bundle $TS$.
The Liouville vector field $Y$ for $\omega_C$ defined by
$i_Y\omega_C=-\rmd\psi\circ J_C$ satisfies $\rmd\psi (Y)>0$, so
the contact manifold $(S,\alpha_S)$ is the strong convex boundary
of the symplectic manifold $\bigl( \psi^{-1}([0,c]),\omega_C\bigr)$.
\subsection{The symplectic cobordism $(W,\omega)$}
\label{subsection:cobordism}
Here are the properties we require of the symplectic cobordism $(W,\omega)$:

\begin{itemize}
\item[(C1)] $(W,\omega)$ is compact, connected and $\pi$-semipositive
(see below for the definition).
\item[(C2)] The oriented boundary of $W$ equals
\[ \partial W=\overline{M}\sqcup M_+\sqcup S,\]
where $M_+$ is allowed to be empty. The manifolds $M$, $M_+$ and $S$
are not required to be connected.
\item[(C3)] $(M,\alpha)$ is the strong concave boundary
of~$(W,\omega)$. By this we mean that there is a Liouville vector field
$Y$ for $\omega$ defined near $M\subset W$
and pointing into $W$ along $M$ such that $\alpha=i_Y\omega|_{TM}$.
\item[(C4)] On a neighbourhood of $M_+\subset W$ there is an
$\omega$-tame almost complex structure $J_+$ relative to which
the boundary $M_+$ is $J_+$-convex, i.e.\ the $J_+$-invariant
subbundle of $TM_+$ is a contact structure.
\item[(C5)] $(S,\alpha_S)$ is a strong convex boundary component
of $(W,\omega)$.
\end{itemize}

For the definition of $\pi$-semipositivity, which is essentially
the one given in
\cite[Definition~6.4.5]{mcsa04}, recall that
a homology class $A\in H_2(W)$ is said to be {\em spherical\/} if it lies
in the image of the Hurewicz homomorphism $\pi_2(W)\rightarrow H_2(W)$.
Write $c_1$ for the first Chern class of the symplectic manifold
$(W,\omega)$, defined via any almost complex structure on $W$
in the contractible space of $\omega$-tame structures, and
$c_1(A)$ for the evaluation of this class on~$A$. By $\omega (A)$
we denote the evaluation of the de~Rham cohomology class
$[\omega]\in H^2_{\dR}(W)$ on~$A$.

\begin{defn}
Let $\kappa$ be a positive real number.
A symplectic manifold $(W,\omega)$ of dimension $2n$
is {\bf $\kappa$-semipositive} if any spherical class $A\in H_2(W)$
with $0<\omega (A)<\kappa$ and $c_1(A)\geq 3-n$ satisfies
$c_1(A)\geq 0$.
\end{defn} 

Note that this condition is automatically satisfied for
symplectic manifolds of dimension at most~$6$.

One particular case of interest to us is the one where, in addition to
conditions (C1) to (C5), the symplectic form $\omega$ is an
exact form $\omega=\rmd\lambda$ with $\lambda|_{TM}=\alpha$.
We shall refer to this as the {\em Liouville case},
since the conditions are those of a concave boundary in a Liouville
cobordism, cf.~\cite{geze}. In this case, too, $\kappa$-semipositivity
is automatic.
\subsection{The main theorem}
\label{subsection:main}
We can now formulate our main result.

\begin{thm}
\label{thm:main}
Given a symplectic cobordism $(W,\omega)$
satisfying conditions {\rm (C1)} to {\rm (C5)}, there exists a
nullhomologous Reeb link in its concave end $(M,\alpha )$ of total
action smaller than~$\pi$. In the Liouville case there
is in fact a contractible Reeb orbit of period smaller than~$\pi$.
\end{thm}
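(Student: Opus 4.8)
The plan is to follow the McDuff-Hofer strategy adapted to holomorphic spheres. First I would cap off the convex end: attach a symplectic cap to the component $S$ using the fact (established in Section~\ref{subsection:S}) that $(S,\alpha_S)$ sits inside the closed Kähler manifold $C=P\times\CP^1$ as a convex level set of the plurisubharmonic function $\psi$, so that the region $\psi^{-1}([0,c])$ can be glued to $W$ along $S$ to produce a larger symplectic manifold $\widehat{W}$ whose only convex boundary is $M_+$ (if nonempty) and whose concave boundary is still $\overline{M}$; then complete $\widehat{W}$ by attaching cylindrical ends $M\times(-\infty,0]$ and (if needed) $M_+\times[0,\infty)$ equipped with translation-invariant almost complex structures. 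The key point is that inside the cap we retain a distinguished family of holomorphic spheres: for each point $p\in P$ near the relevant level, the sphere $\{p\}\times\CP^1$ is $J_C$-holomorphic, has symplectic area exactly $\pi=\int_{\CP^1}\omega_{\FS}$, and has $c_1$ equal to~$2$. These spheres, intersected with the region on the $W$-side of $S$, furnish the seed of a moduli space $\mathcal M$ of genus-zero $J$-holomorphic curves in the completion, where $J$ is chosen to agree with $J_C$ on the cap, to be $\omega$-tame throughout, and to be adjusted near $M_+$ to coincide with a small perturbation of $J_+$ so that $M_+$ is $J$-convex.

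Next I would set up and analyse this moduli space. One marked point is constrained to lie on $P_\infty=P\times\{\infty\}$ (the "infinity divisor" of the cap), which is a symplectic submanifold the spheres meet transversally once; this constraint cuts the dimension down and, together with $\pi$-semipositivity, rules out bubbling off of spheres carrying the infinity-divisor intersection or of ghost/negative-Chern-number components — this is exactly what the hypothesis $c_1(A)\ge 3-n\Rightarrow c_1(A)\ge 0$ for $0<\omega(A)<\pi$ is engineered to control, since every sphere in play has area at most~$\pi$. The moduli space is a smooth manifold of the expected (positive) dimension near the seed curve, and it cannot be compact: the maximum principle for $\psi$ on holomorphic curves (noted in Section~\ref{subsection:S}) prevents the curves from escaping into the cap beyond $\psi^{-1}(c)$, the $J_+$-convexity prevents them from touching $M_+$, and so the only remaining source of non-compactness is breaking along Reeb orbits in the concave end $M$. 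By the SFT compactness theorem of~\cite{behwz03}, a sequence running off to the boundary of $\mathcal M$ converges to a holomorphic building whose lowest level is a punctured sphere in the symplectization $\R\times M$ asymptotic to Reeb orbits of~$\alpha$; the total set of positive asymptotic orbits of this punctured sphere forms the desired Reeb link.

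It remains to extract the homological and quantitative conclusions. The negatively asymptotic orbits, taken together with the marked-point/homology data, bound the broken curve, so the collection of Reeb orbits appearing as asymptotic limits is nullhomologous in~$M$ — one reads this off from the fact that the original spheres are closed (hence nullhomologous), that $P_\infty$ and the cap contribute no homology in $M$, and that removal of punctures in a symplectization respects homology. The total action of this link equals the $\omega$-energy that "leaks" into the symplectization level, which is bounded above by the total symplectic area $\pi$ of the seed sphere by Stokes' theorem and positivity of area on all other levels; hence the total action is $<\pi$. In the Liouville case $\omega=\rmd\lambda$ with $\lambda|_{TM}=\alpha$, the curve on the $W$-side is itself exact, so a single puncture already produces a closed Reeb orbit, and the exactness of $\lambda$ forces that orbit to be contractible (it bounds the holomorphic plane), with period bounded by~$\pi$ by the same energy estimate. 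I expect the main obstacle to be the compactness/transversality bookkeeping in the middle step: proving that no disallowed bubbling or breaking occurs — in particular that the infinity-divisor marked point stays on an unbroken component and that $\pi$-semipositivity genuinely excludes all the bad configurations in the SFT limit — and correctly identifying which component of the resulting holomorphic building carries the Reeb asymptotics that assemble into a nullhomologous link.
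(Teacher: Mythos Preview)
Your strategy matches the paper's --- cap off $S$ inside $C=P\times\CP^1$, study spheres in the class of $\{*\}\times\CP^1$, rule out bubbling via $\pi$-semipositivity, and detect Reeb orbits through SFT breaking --- but there are gaps. A slip first: the cap is \emph{not} $\psi^{-1}([0,c])$, which has $S$ as its convex boundary (same orientation as in~$W$, so the gluing is ill-posed); it is the closure $C_{\infty}$ of the component of $C\setminus S$ containing $P_{\infty}$. Your later references to $P_{\infty}$ and to the full spheres $\{p\}\times\CP^1$ show you had the right object in mind. Likewise, you cannot attach a cylindrical end along $M_+$, since (C4) gives only $J$-convexity, not a Liouville field; the paper leaves $M_+$ as a boundary and uses the maximum principle there.

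The substantive gap is the non-compactness of the moduli space: you assert it ``cannot be compact'' and then argue that the only \emph{possible} failure of compactness is Reeb breaking, but you never establish that a failure actually occurs. This is the core of the proof. The paper cuts the moduli space down to a $1$-manifold $\MM_{\gamma}=\ev^{-1}(\gamma)$ by requiring the marked point to lie on a proper arc $\gamma$ running from $F\subset\partial\wtW$ into the negative cylindrical end; near $\gamma(0)$ the preimage is a single half-open interval (by the explicit description of spheres in $U_{\partial}$) and there is no other boundary point, so $\MM_{\gamma}$ has a non-compact component. Your $P_{\infty}$-constraint alone yields a $(2n-2)$-dimensional space which can perfectly well be compact when $M=\emptyset$ (cf.\ the Oancea--Viterbo discussion at the end of the paper), so an additional device is needed to force the curves into the concave end. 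A similar imprecision affects your Liouville argument: a plane in the top level only exhibits its asymptotic orbit as contractible in $W$, not in~$M$. The paper instead uses that a genus-$0$ building of height $k_-|1$ with $k_->0$ contains at least two finite energy planes, at most one of which can meet $P_{\infty}$, and in the Liouville case Stokes forbids any other top-level plane with a negative puncture; hence some plane must lie in a symplectisation level $\R\times M$, which gives contractibility \emph{in~$M$}.
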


\begin{rem}
The choice $\kappa=\pi$ in (C1) is made purely for notational
convenience. In the general case, one would have to
replace the Fubini--Study form $\omega_{\FS}$ by $(\kappa/\pi)\omega_{\FS}$,
and the open unit disc $B_1$ by a disc of radius $\sqrt{\kappa/\pi}$.
The action of the Reeb link predicted by our theorem would then be
smaller than~$\kappa$.
\end{rem}

A neighbourhood of $M\subset (W,\omega)$ looks like a
neighbourhood of $\{ 0\}\times M$ in the half-symplectisation
$\bigl([0,\infty)\times M,\rmd(\rme^s\alpha)\bigr)$. This allows us
to define a symplectic form $\omega_-$ on the manifold
\[ (-\infty,0]\times M\cup_M W \]
by
\[ \omega_-:=\begin{cases}
                  \omega          & \text{on $W$},\\
                  \rmd(\rme^s\alpha) & \text{on $(-\infty,0]\times M$}.
                  \end{cases}
\]

Any contact form defining the cooriented contact structure
$\xi:=\ker\alpha$ can be written as $\rme^h\alpha$ for some smooth function
$h\co M\rightarrow\R$. Rescaling this by a constant function
(which does not change the Reeb dynamics qualitatively) we may
assume that $h$ takes negative values only. Replacing $M$ by
\[ \{ (h(x),x)\in (-\infty,0]\times M\co x\in M\}
\subset (-\infty,0]\times M\cup_M W  \]
we obtain a cobordism as in Theorem~\ref{thm:main},
with concave boundary $(M,\rme^h\alpha)$.

In the Liouville case we can define the collar of $M$ in $W$
via the Liouville vector field $Y\equiv\partial_s$ given by
$i_Y\omega=\lambda$. Then both $\lambda$ and $\rme^s\alpha$
are $Y$-invariant and evaluate to zero on~$Y$; since they
coincide on $TM$, they coincide near~$M$. In other words,
$\rme^s\alpha$ defines an extension of the primitive $\lambda$
to $(-\infty,0]\times M$.

These considerations lead to the following corollary.

\begin{cor}
\label{cor:main}
Given a symplectic cobordism $(W,\omega)$
satisfying conditions {\rm (C1)} to {\rm (C5)},
the contact manifold $(M,\xi)$ satisfies the strong Weinstein
conjecture. In the Liouville case, any contact form
defining $\xi$ has a contractible periodic
Reeb orbit. \qed
\end{cor}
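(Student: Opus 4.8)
The plan is to derive the corollary directly from Theorem~\ref{thm:main} by means of the collar modification sketched in the two paragraphs just above it. Theorem~\ref{thm:main} only asserts the existence of a nullhomologous Reeb link (resp.\ a contractible orbit, in the Liouville case) for the one contact form $\alpha$ that happens to sit at the concave boundary of $(W,\omega)$, whereas the strong Weinstein conjecture for $(M,\xi)$ concerns \emph{every} contact form defining $\xi$. So the whole task is to exhibit an arbitrary defining form as the concave boundary of a (possibly enlarged) cobordism that still satisfies (C1)--(C5), and then to quote Theorem~\ref{thm:main}.

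In detail, I would proceed as follows. Let $\beta$ be an arbitrary contact form with $\ker\beta=\xi$, and write $\beta=\rme^h\alpha$. Multiplying $\beta$ by a small positive constant replaces $h$ by $h-C$; this only reparametrises the Reeb flow, so it changes neither the set of periodic orbits (as subsets of $M$) nor their homology classes, and we may therefore assume $h\co M\ra\R$ takes only negative values. On $(-\infty,0]\times M\cup_M W$ equipped with the symplectic form $\omega_-$ defined above, let $W_h$ be the compact region bounded below by the graph $M_h:=\{(h(x),x)\co x\in M\}$. Then $W_h$ is a compact symplectic cobordism whose concave boundary is $M_h$, and $i_{\partial_s}\omega_-|_{TM_h}$ is identified with $\rme^h\alpha=\beta$ under the graph map. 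I would then check that $(W_h,\omega_-)$ satisfies (C1)--(C5): attaching the collar $\{h(x)\leq s\leq 0\}$ is a deformation retraction onto $W$, so $\pi_2$, $H_2$, $c_1$ and $[\omega_-]$ are identified with those of $(W,\omega)$ and hence $\pi$-semipositivity persists, giving (C1); (C2) holds because only the $M$-component of the boundary has been moved, and it now carries $\beta$; (C3) holds because $\partial_s$ is a Liouville field for $\rmd(\rme^s\alpha)$ that points into $W_h$ along $M_h$ with $i_{\partial_s}\omega_-|_{TM_h}=\rme^h\alpha$; and (C4), (C5) are untouched, since $M_+$, $S$, and the chosen data $J_+$ and $\alpha_S$ are all unchanged. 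Applying Theorem~\ref{thm:main} to $(W_h,\omega_-)$ produces a nullhomologous Reeb link for $\beta$; as $\beta$ was arbitrary, $(M,\xi)$ satisfies the strong Weinstein conjecture.

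For the Liouville case I would additionally verify that $(W_h,\omega_-)$ remains Liouville with a primitive restricting to $\beta$ on $M_h$. Taking the collar of $M$ in $W$ along the Liouville field $Y=\partial_s$ determined by $i_Y\omega=\lambda$, the forms $\lambda$ and $\rme^s\alpha$ are both $Y$-invariant, vanish on $Y$, and agree with $\alpha$ on $TM$; hence they coincide on a neighbourhood of $M$, and $\lambda_-:=\rme^s\alpha$ on the collar, $\lambda_-:=\lambda$ on $W$, defines a global primitive of $\omega_-$ with $\lambda_-|_{TM_h}=\rme^h\alpha=\beta$. The Liouville clause of Theorem~\ref{thm:main} then yields a contractible periodic Reeb orbit for~$\beta$.

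I do not expect any serious obstacle here: the corollary is a formal consequence of the main theorem, and the only points needing a word of justification are the persistence of $\pi$-semipositivity across the collar surgery (immediate from the homotopy equivalence) and, in the Liouville case, the extension of the primitive over the collar (immediate from $Y$-invariance). All the real difficulty of the paper lies in Theorem~\ref{thm:main} itself.
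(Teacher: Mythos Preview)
Your proposal is correct and follows precisely the paper's own approach: the corollary is marked \qed\ because its proof \emph{is} the two paragraphs preceding it, which describe exactly the collar extension $W_h$ and, in the Liouville case, the $Y$-invariance argument for extending the primitive. Your write-up is in fact more careful than the paper's, since you explicitly verify (C1)--(C5) for $W_h$ (in particular the persistence of $\pi$-semipositivity via the deformation retraction), which the paper leaves implicit.
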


\begin{exs}
(1) Let $(M,\xi)$ be a closed contact manifold of dimension $3$ or $5$
occurring as the concave end of a strong symplectic cobordism
whose convex end is $S^3$ or $S^5$, respectively,
with its standard contact structure~$\xist$. Then $(M,\xi)$ satisfies
the strong Weinstein conjecture.

(2) If $(M,\xi)$ with $\dim M=2n-1$ is Liouville cobordant to
$(S^{2n-1},\xist)$, e.g.\ if $(S^{2n-1},\xist)$ can be obtained from
$(M,\xi)$ by contact surgery, then any contact form defining $\xi$
has a contractible closed Reeb orbit.
\end{exs}

Related cobordism-theoretic arguments allow one to reprove the
result of Abbas et al.~\cite{ach05} that planar contact structures
on $3$-manifolds satisfy the strong Weinstein conjecture.
Our proof of Theorem~\ref{thm:main} rests on the
construction of a symplectic cap, see Section~\ref{subsection:cap}
below. A cap in the case of planar contact structures
has been constructed by Etnyre~\cite{etny04}. Details of the
ensuing holomorphic curves argument can be found in~\cite{doer11}.

The methods for proving Theorem~\ref{thm:main} also yield the following
result. On the face of it, this is stronger than
a result of McDuff~\cite[Theorem~1.4]{mcdu91}, but her proof
actually yields the result we formulate here. In fact, the proof
we give in Section~\ref{subsection:McDuff} below is essentially
hers, except that we paraphrase it in the more sophisticated
language of~\cite{mcsa04}.

\begin{thm}[McDuff]
\label{thm:McDuff}
Let $(W,\omega)$ be a symplectic cobordism satisfying conditions {\rm (C1)}
to {\rm (C5)}, but now with the additional assumption that
$M$ be empty, i.e.\ there is no concave boundary component.
Then $M_+$ is likewise empty.
\end{thm}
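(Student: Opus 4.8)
The plan is to run the same holomorphic‑sphere machinery used for Theorem~\ref{thm:main}, but exploit the absence of a concave end to force a contradiction unless $M_+$ is empty. First I would pass to the completed cobordism: attach a symplectic cap along the convex boundary component $S$ (as in Section~\ref{subsection:cap}) so that $S$ is capped off by a copy of $P\times\CP^1$, and attach the positive cylindrical end $\bigl([0,\infty)\times M_+,\rmd(\rme^s\alpha_+)\bigr)$ along $M_+$ using the $J_+$‑convexity from (C4). Because $M$ is empty by hypothesis, the resulting space $\widehat W$ has no negative cylindrical end. Then I would choose the moduli space $\MM$ of $J$‑holomorphic spheres in the homology class of $P_\infty':=\{\mathrm{pt}\}\times\CP^1$ inside the cap $P\times\CP^1$, for a generic $\omega$‑tame $J$ agreeing with $J_C$ on the cap and with $J_+$ on the positive end; $\pi$‑semipositivity from (C1) guarantees that $\MM$ is a smooth manifold of the expected dimension with no bubbling off of multiply covered or negative‑Chern spheres interfering, and a point constraint cuts $\MM$ down to dimension $1$.

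The key structural facts are: (i) $\MM$ is nonempty, since the obvious family of spheres $\{q\}\times\CP^1$ inside the cap provides one component; (ii) $\MM$ is noncompact — it cannot be closed off because the spheres, as $q$ ranges over $P$, must leave the cap region, and the only places for energy or intersection points to escape are the ends; and (iii) $\MM$ cannot develop a holomorphic building with nontrivial levels in a \emph{negative} end, for the simple reason that there is no negative end. By the SFT compactness theorem~\cite{behwz03}, a sequence in $\MM$ escaping to the boundary of the compactified moduli space converges to a building; every nontrivial sublevel must live in a symplectization, hence (since the negative end is absent) all breaking would have to occur at the positive end $M_+$. But the maximum principle for $J_+$‑convex boundaries — exactly the mechanism recalled after the definition of the plurisubharmonic potential in Section~\ref{subsection:S} — prevents a holomorphic sphere (or any non‑constant piece with no negative puncture) from entering a neighbourhood of $M_+$ at all. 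I would spell this out: a sphere component mapping into the positive cylindrical end, with all punctures positive, would have a coordinate function (the symplectization coordinate $s$, which is plurisubharmonic there) attaining an interior maximum, forcing it to be constant; likewise no genuine breaking at $M_+$ is possible. Hence the compactification of $\MM$ is in fact compact, contradicting (ii) — unless there is nothing to break off, i.e. unless $M_+=\varnothing$.

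The order of steps: (1) build $\widehat W$ by capping $S$ and adding the positive end at $M_+$; (2) set up the constrained moduli space $\MM$ in the class $[P_\infty']$, using (C1) for transversality/compactness of the interior and ruling out bad bubbles; (3) show $\MM$ is nonempty and noncompact; (4) apply SFT compactness and observe that, with no negative end, the limiting building has all breaking at $M_+$; (5) invoke the maximum principle at the $J_+$‑convex boundary to kill every such limiting piece, so the building degenerates trivially and $\MM$ is compact; (6) conclude $M_+=\varnothing$.

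The main obstacle I expect is step~(5) combined with step~(3): one must argue carefully that noncompactness of $\MM$ is genuine and can only be "resolved" by breaking at $M_+$, and then that the maximum principle really does forbid \emph{all} such limiting configurations — including, e.g., a cascade whose only nonconstant part is a plane asymptotic to a Reeb orbit in $M_+$, or a sphere that dips into the positive end. This is where the $J_+$‑convexity hypothesis (C4) is doing all the work, and it is essentially McDuff's original argument from~\cite{mcdu91} rephrased; the monotonicity/area estimates needed to see that the family $\{q\}\times\CP^1$ cannot stay in a compact part of $\widehat W$ as $q\to\partial P$ are the other delicate point, but they are standard once the geometry of the cap is fixed.
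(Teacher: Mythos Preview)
There is a genuine gap in step~(3), the noncompactness argument. Your constrained moduli space is set up without any reference to~$M_+$: you fix a point constraint somewhere (presumably in the cap), obtain a moduli space, and then assert it is noncompact because ``the spheres, as $q$ ranges over~$P$, must leave the cap region''. But once a point constraint is fixed, $q$ is not varying; and even in the unconstrained moduli space, letting $q\to\partial P$ simply produces the \emph{boundary} $\partial P\times\CP^1$ of the moduli space --- the standard spheres stay in the cap all the way out to~$\partial P$. Nothing in your setup forces a sequence of spheres to run off to~$M_+$, so you have no source of noncompactness and hence no contradiction from $M_+\neq\emptyset$. (Incidentally, a point constraint cuts the moduli space with one marked point from dimension $2n$ to dimension~$0$, not~$1$.)

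The paper supplies exactly the missing link: it chooses a path $\gamma$ from a point of $F\subset\partial P\times\CP^1$ to a point of~$M_+$, and sets $\MM_\gamma=\ev^{-1}(\gamma)$. This is a $1$-manifold whose unique boundary point is $[\id_{\CP^1},\gamma(0)]$; the component through it is therefore a half-open interval, so $\MM_\gamma$ is not compact (Proposition~\ref{prop:Mgamma}). Now $M_+$ enters directly: the open end of this interval would have to be accounted for either by bubbling (ruled out by $\pi$-semipositivity as in Section~\ref{subsection:stable}) or by spheres approaching~$M_+$ along~$\gamma$ (ruled out by the maximum principle, Lemma~\ref{lem:spheres}~(iv)). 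That is the contradiction. Note also that the paper does \emph{not} attach a cylindrical end at~$M_+$: condition~(C4) gives only $J_+$-convexity, not a strong convex boundary, so a symplectisation collar is not available in general; keeping $M_+$ as an honest boundary and invoking the maximum principle there makes SFT compactness unnecessary --- ordinary Gromov compactness suffices, since with $M=\emptyset$ there is no cylindrical end at all.
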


\section{Applications}
\label{section:applications}
Before we turn to the proof of Theorem~\ref{thm:main}, we describe
a number of applications. Some are parallel to the $4$-dimensional
applications of the `ball theorem' proved in~\cite{geze}; we shall be brief
in the discussion of those.
\subsection{Reeb dynamics}
In \cite{vite87} Viterbo proved the existence of closed characteristics
on compact contact type hypersurfaces in $\R^{2n}$ with its standard
symplectic structure. In \cite[Theorem~4.4]{vite99} he extended
this to contact type hypersurfaces in subcritical Stein manifolds;
an alternative proof was given by
Frauenfelder--Schlenk~\cite[Corollary~3]{frsc07}.
Our main theorem allows us to prove the strong Weinstein conjecture
in these situations. First we recall the basic definitions.

\begin{defn}
A hypersurface $M$ in a symplectic manifold $(V,\omega)$
is said to be of {\bf contact type}
if there is a Liouville vector field $Y$ for $\omega$ defined near and
transverse to~$M$. The hypersurface is said to be of
{\bf restricted contact type}
if $Y$ is defined on all of~$V$.
\end{defn}

It will be understood that a contact type hypersurface $M$ is equipped
with the contact form $i_Y\omega|_{TM}$. If $Y$ and hence the primitive
$i_Y\omega$ for $\omega$ is globally defined, this places us
in the Liouville case of our main theorem. Recall our convention from
Section~\ref{section:Weinstein}: when we say that $M$ satisfies the
Weinstein conjecture, we mean that {\em every\/} contact form defining
the contact structure $\ker (i_Y\omega|_{TM})$ has a closed Reeb orbit.
Periodic Reeb orbits of the specific contact form $i_Y\omega|_{TM}$
will be referred to as {\bf closed characteristics}.

\begin{defn}
A {\bf Stein manifold} is a complex manifold $(V,J)$
admitting a proper holomorphic embedding into some $\C^N$.
Then $(V,J)$ admits an exhausting (i.e.\ bounded from below and proper)
strictly plurisubharmonic function $\psi$, e.g.\ the restriction of
the function $\sum_{k=1}^N |z_k|^2$ on~$\C^N$. The $2$-form
\[ \omega_{\psi}:=-\rmd(\rmd\psi\circ J)\]
is then a symplectic form on~$V$. By \cite[Theorem~1.4.A]{grel91},
any other exhausting strictly plurisubharmonic function on $(V,J)$
gives rise to a symplectomorphic copy of $(V,\omega_{\psi})$.

A {\bf Stein domain} is a regular
sub-level set $\{ \psi\leq c\}$; this is also called a
{\bf Stein filling} of the level set $\psi^{-1}(c)$ with
contact structure given by the $J$-invariant sub-bundle
of its tangent bundle.

If $\psi$ is also a Morse function,
then the index of any of its critical points is at most equal to
$(\dim_{\R}\!V)/2$, cf.\ Remark~\ref{rem:Morse} below. A Stein manifold
or domain is called {\bf subcritical} if $\psi$ is Morse
with all critical points of Morse index strictly smaller than
$(\dim_{\R}\!V)/2$.
\end{defn}

\begin{rem}
Stein fillability of a contact manifold $(M,\xi)$ can also be defined
by requiring the existence of a compact complex manifold $V$ with
boundary~$M$, admitting a strictly plurisubharmonic function $\psi$ for which
$M$ is a regular level set $\psi^{-1}(c)$. The interior $\Int (V)$
then admits an exhausting strictly plurisubharmonic function
(with the same level sets as~$\psi$): simply replace $\psi$ by
$h\circ\psi$ with $h\co(-\infty,c)\rightarrow\R$ convex and strictly
increasing, with $h(x)\rightarrow\infty$ as $x\rightarrow c$. By Grauert's
famous theorem~\cite{grau58}, $\Int (V)$ is then again a Stein manifold.
Moreover, thanks to Gray stability, a level set sufficiently close
to $\infty$ will be a contactomorphic copy of~$(M,\xi)$.
\end{rem}

\begin{cor}
\label{cor:Stein}
Any smooth compact hypersurface of contact type (with respect to
some symplectic form~$\omega_{\psi}$) in a subcritical Stein
manifold satisfies the strong Weinstein conjecture. If the hypersurface is
of restricted contact type, every contact form defining the
induced contact structure has a contractible periodic Reeb orbit.
\end{cor}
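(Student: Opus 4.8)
The plan is to realise $M$, endowed with the contact form $\alpha=i_Y\omega_\psi|_{TM}$, as the concave boundary of a compact symplectic cobordism satisfying {\rm (C1)}--{\rm (C5)}, and then to apply Corollary~\ref{cor:main}. Let $(V,J)$ be the subcritical Stein manifold and $\psi$ an exhausting strictly plurisubharmonic Morse function with $\omega_\psi=-\rmd(\rmd\psi\circ J)$. Since $M$ is compact it lies in some regular sublevel set $V_c=\{\psi\le c\}$, a compact subcritical Stein domain. By Cieliebak's splitting theorem for subcritical Stein domains, $V_c$ is, up to Stein deformation -- hence, invoking \cite[Theorem~1.4.A]{grel91} along the deformation, up to symplectomorphism -- of split form $\{\psi_P+|z|^2/4\le c\}$ in $P\times\C$, where $(P,J_P,\psi_P)$ is a compact connected Stein domain with $\partial P$ as maximal regular level set of $\psi_P$. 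Such a symplectomorphism carries $M$ to a hypersurface of contact type whose induced contact structure is contactomorphic to $\xi$, and preserves restricted contact type, so we may assume $V_c$ is split to begin with. After rescaling $\psi$ (again by \cite[Theorem~1.4.A]{grel91}) and enlarging $P$ slightly we may also assume $c<\min\{\max\psi_P,1/4\}$. Then $\partial V_c=\psi^{-1}(c)$ is exactly a hypersurface $S$ of the kind described in Section~\ref{subsection:S}, lying in $\Int(P)\times B_1\subset P\times\CP^1$, and $(S,\alpha_S)$ is the strong convex boundary of $(V_c,\omega_\psi)$.

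Since $V_c$ is subcritical of dimension $2n$ with $n\ge2$, it is homotopy equivalent to a CW-complex of dimension $\le n-1<2n-1$, so $H_{2n-1}(V_c;\Z_2)=0$; as $M$ is a closed, orientable (hence two-sided) hypersurface in $\Int(V_c)$, it follows that $M$ separates $V_c$ into a compact piece $U$ -- bounded by $M$ on the side into which $Y$ points outward -- and its complement. We set
\[ W:=\overline{V_c\setminus U},\qquad \omega:=\omega_\psi|_W. \]
This is a compact connected cobordism (one may assume $M$ connected, the general case being analogous) with $\partial W=\overline M\sqcup S$, so $M_+=\emptyset$; along $M$ the Liouville field $Y$ points into $W$ and induces $\alpha$; and $(S,\alpha_S)$ is a strong convex boundary component. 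This yields {\rm (C2)}--{\rm (C5)}. Finally $\omega$ is exact on $W$, being the restriction of the exact form $\omega_\psi$ on $V$, so $\omega(A)=0$ for every $A\in H_2(W)$; hence there are no spherical classes with $0<\omega(A)$ and $\pi$-semipositivity is vacuous, giving {\rm (C1)}.

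By Corollary~\ref{cor:main}, $(M,\xi)$ satisfies the strong Weinstein conjecture. If $M$ is moreover of restricted contact type, then $Y$ is globally defined on $V$, and $\lambda:=i_Y\omega_\psi$ is a global primitive of $\omega_\psi$ with $\lambda|_{TM}=\alpha$; its restriction $\lambda|_W$ then makes $(W,\omega)$ an exact cobordism of the Liouville type appearing in Theorem~\ref{thm:main}, so by the corresponding part of Corollary~\ref{cor:main} every contact form defining $\xi$ has a contractible periodic Reeb orbit.

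The step I expect to be the main obstacle is the first paragraph: one needs the fact that a subcritical Stein domain is symplectomorphic to a split one and, more delicately, the bookkeeping to place the given hypersurface inside such a splitting so that the outer boundary has exactly the product form and normalisation ($c<\min\{\max\psi_P,1/4\}$) required in Section~\ref{subsection:S}. Granting that, conditions {\rm (C1)}--{\rm (C5)} fall out by inspection, the only inputs being the separation of $V_c$ by $M$ (a consequence of subcriticality) and the exactness of $\omega_\psi$.
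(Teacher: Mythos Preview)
Your strategy is the paper's: split the subcritical Stein manifold via Cieliebak, trap $M$ below a level set $S$ of the split plurisubharmonic function, and feed the region in between into Corollary~\ref{cor:main}. The difficulties you flag in your final paragraph are not the real obstacle; the paper in fact sidesteps the normalisation $c<\min\{\max\psi_P,1/4\}$ altogether by splitting the whole Stein manifold rather than a domain, picking any sufficiently high regular level set of the split $\psi$, and remarking that for the qualitative conclusion one may replace $B_1$ by~$\C$.

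The genuine gap is elsewhere. Your definition of $U$ as the piece ``bounded by $M$ on the side into which $Y$ points outward'' presupposes that the Liouville field $Y$ points out of the \emph{inner} component and hence into~$W$. Nothing you have written forces this. A contact type hypersurface comes equipped with a transverse Liouville field, but with no preferred direction; and since the global primitive $-\rmd\psi\circ J$ of $\omega_\psi$ is not assumed to restrict to $\alpha$ on~$M$, a Stokes argument on the inner piece does not settle the question in the non-restricted case. If $Y$ happened to point out of $W$ along $M$, your $W$ would have $M$ as a \emph{convex} boundary component, and Corollary~\ref{cor:main} would say nothing about Reeb orbits on~$M$.

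The paper closes this gap by invoking Theorem~\ref{thm:McDuff}: were $Y$ to point out of $W$ along $M$, the cobordism $W$ would satisfy (C1)--(C5) with empty concave end and with $M_+=M\neq\emptyset$, contradicting McDuff's result that in such a cobordism $M_+$ must be empty. Inserting this one sentence makes your argument complete.
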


\begin{proof}
By a result of Cieliebak~\cite{ciel02}, cf.~\cite{ciel},
any subcritical Stein manifold is symplectomorphic to
a split one $(V\times\C, J_V\oplus i)$, where $(V,J_V)$
is some Stein manifold. So we have a strictly plurisubharmonic function
$\psi_V$ on $V$, and $\psi:=\psi_V+|z|^2/4$ is a strictly
plurisubharmonic function on $V\times\C$.

Thus, we are dealing with a compact hypersurface $(M,\xi)$
of (restricted) contact type in $(V\times\C,\omega_{\psi})$. Without loss of
generality we may take $M$ to be connected.
Since $V\times\C$ has trivial homology in codimension~$1$,
$M$ separates $V\times\C$
into a bounded and an unbounded part. Choose a regular level
set $\psi^{-1}(c)$ containing $M$ in the interior, and write
$W$ for the part between $M$ and $\psi^{-1}(c)$.
The Liouville vector field $Y$ near $M$ (or on all of~$W$ in
the case of restricted contact type) points into~$W$ along~$M$,
otherwise Theorem~\ref{thm:McDuff} would be violated.
Hence Corollary~\ref{cor:main}
applies to this symplectic cobordism~$W$. (For this qualitative result,
it is irrelevant that we have replaced $B_1$ by~$\C$.)
\end{proof}

As this proof shows, the corollary is close in spirit to
the work of Floer et al.~\cite{fhv90}, where the
existence of closed characteristics is proved in split symplectic
manifolds $P\times\C^l$ with $P$ closed and $\pi_2(P)=0$.
 
The following corollary extends earlier work of
Andenmatten~\cite[Theorem~1.4]{ande99} and Yau~\cite{yau04}, who
impose additional homological conditions on the Stein filling.

\begin{cor}
\label{cor:filling}
If $(M,\xi)$ is a subcritically Stein fillable contact manifold,
any contact form defining $\xi$ has a contractible
periodic Reeb orbit.
\end{cor}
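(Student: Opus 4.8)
The plan is to reduce the statement to Corollary~\ref{cor:Stein}, which already covers hypersurfaces of (restricted) contact type in subcritical Stein manifolds; the only work is to exhibit $(M,\xi)$ as such a hypersurface.

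First I would replace the given subcritical Stein filling by an honest subcritical Stein \emph{manifold}. By definition the filling is a regular sublevel set $D=\{\psi\le c_0\}$ of a Morse strictly plurisubharmonic function~$\psi$, all of whose critical points have Morse index $<n$ and hence lie in $\Int (D)$, since $M=\psi^{-1}(c_0)$ is a regular level set. As in the Remark following the definition of Stein domains, Grauert's theorem shows that $\Int (D)$ is a Stein manifold carrying the exhausting strictly plurisubharmonic function $\widetilde\psi:=h\circ\psi$, where $h\co(-\infty,c_0)\to\R$ is convex and strictly increasing with $h(x)\to\infty$ as $x\to c_0$. Post-composition with a diffeomorphism of the target changes neither the set of critical points nor the signatures of the Hessians there, so $\widetilde\psi$ is again Morse with all critical points of index~$<n$; thus $\Int (D)$ is a subcritical Stein manifold.

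Next, choose a regular value $c<c_0$ so close to $c_0$ that $[c,c_0]$ contains no critical value of~$\psi$. Then $M':=\psi^{-1}(c)=\widetilde\psi^{-1}(h(c))$ is a compact regular level set of $\widetilde\psi$ inside $\Int (D)$, hence a smooth compact hypersurface of \emph{restricted} contact type with respect to $\omega_{\widetilde\psi}$: the globally defined Liouville vector field $Y$ given by $i_Y\omega_{\widetilde\psi}=-\rmd\widetilde\psi\circ J$ satisfies $\rmd\widetilde\psi(Y)>0$ and so is transverse to every level set. Since the region $\{c\le\psi\le c_0\}\cong M\times[c,c_0]$ carries no critical point, Gray stability identifies the induced contact manifold $\bigl(M',\ker(i_Y\omega_{\widetilde\psi}|_{TM'})\bigr)$ with $(M,\xi)$. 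Applying Corollary~\ref{cor:Stein} to the restricted-contact-type hypersurface $M'$ in the subcritical Stein manifold $\Int (D)$ now shows that every contact form defining the induced contact structure --- equivalently, every contact form defining~$\xi$ --- has a contractible periodic Reeb orbit. The essential content is thus already contained in Corollary~\ref{cor:Stein} (and ultimately in Theorem~\ref{thm:main}); the only points needing care are the bookkeeping above, namely that passing from $D$ to $\Int (D)$ preserves subcriticality and that the slightly shrunk boundary $M'$ is contactomorphic to $(M,\xi)$, so there is no genuine obstacle to overcome.
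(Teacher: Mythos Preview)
Your proof is correct and follows essentially the same route as the paper: both reduce to Corollary~\ref{cor:Stein} by exhibiting $(M,\xi)$ as a restricted contact type hypersurface in a subcritical Stein manifold. The paper's version is more terse because it takes $(M,\xi)$ directly as a level set $\psi^{-1}(c)$ of the exhausting plurisubharmonic function on the ambient Stein manifold, whereas you carefully pass from the compact filling to its interior and shrink slightly; but this is exactly the bookkeeping the paper's Remark before Corollary~\ref{cor:Stein} already outlines, so the arguments are the same in substance.
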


\begin{proof}
By assumption, $(M,\xi)$ is a level set $\psi^{-1}(c)$ of an
exhausting strictly plurisubharmonic Morse function $\psi$ on
a Stein manifold $(V,J)$. A contact form defining $\xi$,
the $J$-invariant sub-bundle of~$TM$, is given by the restriction of
the global primitive $-\rmd\psi\circ J$ of $\omega_{\psi}$.
So we are in the restricted contact type case of the preceding corollary.
\end{proof}

\begin{rem}
The Floer-homological methods of Viterbo~\cite{vite99} and
Frauenfelder--Schlenk~\cite{frsc07} produce a closed Reeb orbit
contractible in the ambient manifold. In the situation of
Corollary~\ref{cor:filling}, $M$ is disjoint from the
isotropic skeleton of the Stein filling, and by general position
a closed orbit contractible in the subcritical filling is also contractible
in $M$ itself. So that last corollary can alternatively be derived from
their result.
\end{rem}
\subsection{Capacities and non-squeezing}
\label{subsection:capacities}
Given a closed manifold $M$ with contact form $\alpha$ we write
$\inf (\alpha)$ for the infimum of all positive periods
of closed orbits of the Reeb vector field~$R_{\alpha}$. When there
are no closed Reeb orbits, we have $\inf (\alpha)=\infty$,
otherwise an Arzel\`a--Ascoli type argument as in \cite[p.~109]{hoze94} shows
that $\inf (\alpha)$ is a minimum, and in particular positive;
the latter is also a simple consequence of the flow box theorem.

Let $(V,\omega)$ be a $2n$-dimensional symplectic manifold. The manifold
$V$ is allowed to be non-compact or disconnected. It may also
have non-empty boundary, in which case $V$ should be replaced
by $\Int (V)$ in the following definition of a symplectic
invariant of $(V,\omega)$:
\[ c(V,\omega):=\sup_{(M,\alpha)}\{\inf (\alpha)|\,
\text{$\exists$ contact type embedding
$(M,\alpha)\hookrightarrow (V,\omega)$}\} .\]
Here the supremum is taken over all closed, but not necessarily
connected, contact manifolds $(M,\alpha)$ of dimension $2n-1$.
By a contact type
embedding $j\co (M,\alpha)\hookrightarrow (V,\omega)$ we mean that there
is a Liouville vector field $Y$ for $\omega$ defined near $j(M)$
such that $j^*(i_Y\omega)=\alpha$.

In $\R^{2n}\equiv\C^n$ with its standard symplectic form
$\omega_{\st}=(i/2)\, \rmd\bfz\wedge \rmd\overline{\bfz}$
write $B^{2n}_r$ for the open $2n$-ball of radius $r$ and
$Z_r=\C^{n-1}\times B^2_r$ for the cylinder over
the open $2$-ball of radius~$r$.
For $r=1$ we simply write $B$ and $Z$, respectively.

\begin{thm}
\label{thm:capacity}
The invariant $c(V,\omega)$ is a
symplectic capacity, i.e.\ it satisfies the following axioms:
\begin{description}
\item[Monotonicity] If there
exists a symplectic embedding $(V,\omega)\hookrightarrow (V',\omega')$,
then $c(V,\omega)\leq c(V',\omega')$.
\item[Conformality] For any $a\in\R^+$ we have
$c(V,a\omega)=a\, c(V,\omega)$.
\item[Normalisation] $c(B)=c(Z)=\pi$.
\end{description}
\end{thm}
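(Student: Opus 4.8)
The plan is to verify the three capacity axioms directly from the definition of $c(V,\omega)$, drawing on Theorem~\ref{thm:main} only for the normalisation.

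\emph{Monotonicity and Conformality.} These are essentially formal. Given a symplectic embedding $j_0\co (V,\omega)\hookrightarrow (V',\omega')$, any contact type embedding $(M,\alpha)\hookrightarrow (V,\omega)$ with its local Liouville vector field $Y$ composes with $j_0$ to a contact type embedding into $(V',\omega')$: push forward $Y$ by $j_0$ near the image. Thus the supremum defining $c(V',\omega')$ is taken over a set containing (the relevant data for) every competitor for $c(V,\omega)$, giving $c(V,\omega)\leq c(V',\omega')$. For conformality, note that for $a\in\R^+$ a vector field $Y$ is Liouville for $\omega$ iff $aY$ is Liouville for $a\omega$, and $i_{aY}(a\omega)=a^2\, i_Y\omega$; alternatively, and more cleanly, $Y$ Liouville for $a\omega$ iff $Y$ Liouville for $\omega$, with $i_Y(a\omega)=a\,i_Y\omega$. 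So a contact type embedding $(M,\alpha)\hookrightarrow(V,\omega)$ is the same as a contact type embedding $(M,a\alpha)\hookrightarrow(V,a\omega)$, and $\inf(a\alpha)=a\inf(\alpha)$ since the Reeb field of $a\alpha$ is $a^{-1}R_\alpha$ and periods scale by~$a$. Taking suprema yields $c(V,a\omega)=a\,c(V,\omega)$.

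\emph{Normalisation, lower bounds.} Here I would exhibit explicit contact type hypersurfaces. In $B=B^{2n}_1$, for $r<1$ the sphere $\partial B^{2n}_r$ is of restricted contact type via the radial Liouville field $Y=\tfrac12\sum(x_i\partial_{x_i}+y_i\partial_{y_i})$; the induced contact form is the standard one whose Reeb flow has all orbits periodic of period $\pi r^2$, so $\inf=\pi r^2$. Letting $r\to1$ gives $c(B)\geq\pi$. For $Z=\C^{n-1}\times B^2_1$, one uses hypersurfaces of the form $\C^{n-1}\times\partial B^2_r$ — but this is non-compact, so instead take a product-type contact hypersurface: the boundary of $B^{2(n-1)}_R\times B^2_r$ is not smooth, so one rounds corners, or better, embeds a large ellipsoidal hypersurface $\{\sum_{k<n}|z_k|^2/R^2 + |z_n|^2/r^2 = 1\}$ which lies in $Z_r\subset Z$ for $r<1$ and any $R$; its shortest closed characteristic has period $\pi r^2$ (the $z_n$-direction is the smallest semi-axis). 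Thus $c(Z)\geq\pi$.

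\emph{Normalisation, upper bounds.} This is the step where Theorem~\ref{thm:main} enters and, I expect, the main obstacle. For $c(Z)\leq\pi$: given a contact type embedding $(M,\alpha)\hookrightarrow Z\subset Z_r$ with local Liouville field $Y$, I want to build a symplectic cobordism satisfying (C1)--(C5) with concave end $(M,\alpha)$ — or a component thereof — and apply the theorem to conclude $\inf(\alpha)<\pi$ (strictly, then take the supremum). The cap on the convex side is supplied by the $S$ of Section~\ref{subsection:S}: here $P=B^{2n-2}_R$ (a Stein domain, hence strictly plurisubharmonic) and $P\times\CP^1\supset P\times B_1\cong Z_1$, with $S=\psi^{-1}(c)$ a large convex hypersurface enclosing $M$. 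The cobordism $W$ is the region between $M$ and $S$ inside $Z$; one must check $M$ is concave, i.e. $Y$ points inward — if not, Theorem~\ref{thm:McDuff} would force a contradiction as in the proof of Corollary~\ref{cor:Stein}. The subtlety is matching $Z$ (with the unit disc factor) to the normalisation $\omega_{\FS}$ of total integral $\pi$: this is exactly the content of the remark after Theorem~\ref{thm:main}, so for the cylinder one should work with $Z_r$, $r<1$, getting $\inf(\alpha)<\pi r^2<\pi$ (again the replacement of $B_1$ by $B_R$ or $\C$ in the $P$-factor being harmless for the qualitative period bound, as noted in Corollary~\ref{cor:Stein}). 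For $c(B)\leq\pi$: since $B\subset Z$, monotonicity gives $c(B)\leq c(Z)\leq\pi$ for free. Combined with the lower bounds, $c(B)=c(Z)=\pi$. The one genuine thing to nail down is that an arbitrary contact type hypersurface in $Z$ (not just the model ones) can be enclosed by a convex hypersurface of the required product type $S$ and that the resulting $W$ meets conditions (C1)--(C5), in particular $\pi$-semipositivity — but $Z$ is exact and simply connected, so the Liouville case applies and semipositivity is automatic, which closes the argument.
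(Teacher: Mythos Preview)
Your proof is correct and follows essentially the same route as the paper's: monotonicity and conformality are formal, the lower bounds come from explicit contact hypersurfaces with $\inf(\alpha)=\pi r^2$, and the upper bound on $c(Z)$ comes from enclosing $M$ inside a level set of $\psi$ (the paper simply names it as the ellipsoid $E_{\varepsilon}(b,\dots,b,1)$) and applying Theorem~\ref{thm:main} to the region between $M$ and that ellipsoid. Two minor simplifications: for the lower bound on $c(Z)$ the round sphere $\partial B^{2n}_r$ already sits in $B\subset Z$, so no ellipsoid is needed; and semipositivity of the cobordism is automatic because $\omega_{\st}$ is exact on $W\subset\R^{2n}$ (every spherical class has zero $\omega$-area), not because ``the Liouville case applies'' --- that would require the global primitive to restrict to $\alpha$ on $M$, which mere contact type does not guarantee, though it is irrelevant here since you only need a closed orbit, not a contractible one.
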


\begin{proof}
Monotonicity and conformality are obvious from the definition.
The $(2n-1)$-sphere of radius $r<1$ with its standard contact form
has all Reeb orbits closed of period $\pi r^2$, and it admits
a contact type embedding into both $B$ and~$Z$; cf.~\cite{geze},
where this is computed explicitly in the $4$-dimensional case.
This implies that $c(B)$ and $c(Z)$ are bounded from below by~$\pi$.

If $j$ is a contact type embedding of some $(2n-1)$-dimensional
contact manifold $(M,\alpha)$ into $B$ or~$Z$,
then the image $j(M)$ is contained in the interior of an ellipsoid
\[ E=E_{\varepsilon}(b,\dots ,b,1)
    :=\Bigl\{ \sum_{k=1}^{n-1}\frac{|z_k|^2}{b^2}+
    |z_n|^2\leq 1-\varepsilon\Bigr\} \]
for $b>0$ sufficiently large and $\varepsilon>0$
sufficiently small. Now our main theorem applies
to the symplectic cobordism given by the region in $(Z,\omega_{\st})$ between
$j(M)$ and~$\partial E$.
\end{proof}

More generally, this argument shows that if $(P,\omega_P)$ is
a connected but not necessarily compact $(2n-2)$-dimensional
symplectic manifold admitting a plurisubharmonic potential $\psi_P$
(with the boundary as a regular level set in the compact case),
then $c(P\times B_1,\omega_P+\rmd x\wedge\rmd y)\leq\pi$.
It is not difficult to give examples where equality holds, e.g.\
if $P$ is a cotangent bundle or a Stein manifold of finite type.

As an immediate consequence of Theorem~\ref{thm:capacity},
we have Gromov's non-squeez\-ing theorem~\cite[p.~310]{grom85}:

\begin{cor}[Gromov]
There is a symplectic embedding $B^{2n}_r\hookrightarrow Z_R$
if and only if $r\leq R$.\hfill\qed
\end{cor}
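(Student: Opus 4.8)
The plan is to obtain both implications formally from Theorem~\ref{thm:capacity}, so that the only substantive input is the normalisation axiom $c(B)=c(Z)=\pi$ (which in turn rests on the main theorem).

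For the easy implication, suppose $r\le R$. Every point $\bfz=(z_1,\dots,z_n)$ of $B^{2n}_r$ satisfies $|z_n|^2\le\sum_{k=1}^n|z_k|^2<r^2$, so its last coordinate lies in $B^2_r$; hence $B^{2n}_r\subset\C^{n-1}\times B^2_r=Z_r\subset Z_R$. The standard symplectic form restricts compatibly to each of these open subsets of $\R^{2n}$, so the inclusion $B^{2n}_r\hookrightarrow Z_R$ is itself the desired symplectic embedding.

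For the converse, assume a symplectic embedding $B^{2n}_r\hookrightarrow Z_R$ exists. First I would record how rescaling the domain affects the capacity: the linear map $\bfz\mapsto r\bfz$ pulls $\omega_{\st}$ back to $r^2\omega_{\st}$, hence is a symplectomorphism $(B,r^2\omega_{\st})\to(B^{2n}_r,\omega_{\st})$, and likewise $(Z,R^2\omega_{\st})\to(Z_R,\omega_{\st})$. Since $c$ is invariant under symplectomorphisms (apply monotonicity in both directions), the conformality and normalisation axioms give $c(B^{2n}_r,\omega_{\st})=r^2\,c(B)=\pi r^2$ and $c(Z_R,\omega_{\st})=R^2\,c(Z)=\pi R^2$. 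Now monotonicity, applied to the assumed embedding, yields $\pi r^2\le\pi R^2$, i.e.\ $r\le R$.

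The step carrying all the weight here is the equality $c(Z)=\pi$ in Theorem~\ref{thm:capacity}, whose proof there invokes Theorem~\ref{thm:main} for the symplectic cobordism lying between an embedded contact type hypersurface in $Z$ and the boundary of an ellipsoid $E_{\varepsilon}(b,\dots,b,1)\subset Z$. Granting that, the deduction above is purely bookkeeping and presents no further obstacle.
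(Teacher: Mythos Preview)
Your argument is correct and is exactly the standard deduction of non-squeezing from the capacity axioms, which is precisely what the paper intends when it calls this corollary an ``immediate consequence of Theorem~\ref{thm:capacity}'' and marks it with a \qed. There is nothing to add: the easy direction is the inclusion, and the hard direction follows from monotonicity together with conformality and normalisation, as you wrote.
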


Our main theorem allows us to define other capacities in
a similar fashion. One option is to take the infimum over
the total action of null-homologous Reeb links. A further
possibility, for exact symplectic manifolds, is to
work with restricted contact type embeddings and then to take
the infimum over contractible Reeb orbits, see~\cite{geze}.
A comprehensive survey on symplectic capacities can be found
in~\cite{chls07}.

In the case of exact symplectic manifolds $(V,\rmd\lambda)$,
there are in fact two sensible ways to introduce a capacity.
One, carried out in the $4$-dimensional setting in~\cite{geze},
is to consider restricted contact type embeddings for the
given primitive~$\lambda$, i.e.\ embeddings $j\co (M,\alpha)\rightarrow
(V,\rmd\lambda)$ with $j^*\lambda=\alpha$. This leads to an
invariant on the set of exact symplectic manifolds with given
primitive. Alternatively, and more in the
spirit of an Ekeland--Hofer capacity~\cite{ekho88},
one can define a capacity exclusively for subsets $U$ of a given
exact symplectic manifold $(V,\omega=\rmd\lambda)$.
This can be done via restricted contact type embeddings
$j\co (M,\alpha)\rightarrow (V,\omega)$, i.e.\ it is only required that
there be {\em some\/} global primitive $\lambda_j$ for $\omega$, depending on
the embedding~$j$, with $j^*\lambda_j=\alpha$, but in addition
the condition $j(M)\subset U$ is imposed. 

All these capacities have the same normalisation constants,
but we do not know if they are different, in general.
\subsection{Quantitative Reeb dynamics}
By way of an example, we show that our capacity can be used
to recover a result of Frauenfelder et al.~\cite[Remark~1.13.3]{fgs05}.
We improve the constant in their result by appealing to
classical geometry.

\begin{cor}
Let $(M,\alpha)\subset (\R^{2n},\omega_{\st})$ be
a compact hypersurface of contact type. Then
$\inf(\alpha)\leq(n/(2n+1))\pi(\diam(M))^2$.
\end{cor}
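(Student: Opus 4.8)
The plan is to deduce this inequality directly from the symplectic capacity $c$ of Theorem~\ref{thm:capacity}, feeding in one classical fact of convex geometry (Jung's theorem) to pin down the sharp constant.

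First I would unwind the definitions. Since $(M,\alpha)$ is of contact type in $(\R^{2n},\omega_{\st})$, there is a Liouville vector field $Y$ for $\omega_{\st}$ defined on a neighbourhood of $M$ with $\alpha=i_Y\omega_{\st}|_{TM}$. Consequently, for \emph{any} open subset $U\subset\R^{2n}$ with $M\subset U$, the inclusion $M\hookrightarrow U$ is a contact type embedding in the sense of Section~\ref{subsection:capacities}, and hence $\inf(\alpha)\le c(U,\omega_{\st})$ straight from the definition of the invariant.

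Next I would enclose $M$ in a small ball. By Jung's theorem, a subset of $\R^{2n}$ of diameter $d:=\diam(M)$ is contained in a closed ball of radius $r=d\sqrt{\tfrac{2n}{2(2n+1)}}=d\sqrt{\tfrac{n}{2n+1}}$; after a translation (a symplectomorphism of $\R^{2n}$) we may take this ball centred at the origin. As $M$ is compact, $M\subset B^{2n}_{r'}$ for every $r'>r$, so the previous step gives $\inf(\alpha)\le c(B^{2n}_{r'},\omega_{\st})$. The axioms of Theorem~\ref{thm:capacity} then finish the computation: the rescaling $z\mapsto z/r'$ is a symplectomorphism $(B^{2n}_{r'},\omega_{\st})\to(B,(r')^2\omega_{\st})$, so by conformality and normalisation $c(B^{2n}_{r'},\omega_{\st})=(r')^2c(B)=\pi(r')^2$. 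Letting $r'\downarrow r$ yields $\inf(\alpha)\le\pi r^2=\tfrac{n}{2n+1}\,\pi(\diam M)^2$.

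There is no genuinely hard step here once Theorem~\ref{thm:capacity} is available; the only point requiring care is to use the optimal bounding-ball radius from Jung's theorem rather than the trivial estimate ``radius $\le$ diameter'', which would only give the weaker bound $\inf(\alpha)\le\pi(\diam M)^2$. The extremal configuration for Jung's theorem, a regular $2n$-simplex of edge length $d$, is precisely what produces the factor $n/(2n+1)$ — this is the ``classical geometry'' alluded to in the statement.
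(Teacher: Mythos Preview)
Your argument is correct and is essentially identical to the paper's own proof: both translate $M$ into a ball whose radius is the circumradius of~$M$, invoke Jung's theorem to bound this circumradius by $\sqrt{n/(2n+1)}\,\diam(M)$, and then appeal to the normalisation $c(B^{2n}_r)=\pi r^2$ from Theorem~\ref{thm:capacity}. The paper compresses your steps (2)--(4) into a single line but there is no difference in substance.
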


\begin{proof}
Since the symplectic form $\omega_{\st}$ is translation-invariant, we
have a contact type embedding of $(M,\alpha)$ into $B^{2n}_r$
for any $r$ greater than the circumradius of~$M$, which
by \cite{jung01} is
at most equal to $\sqrt{n/(2n+1)}\,\diam (M)$. This bound is optimal;
it is attained for the regular $2n$-simplex. Hence
\[ \inf (\alpha)\leq c(B^{2n}_r)=\pi r^2\;\;
\text{for any $r>\sqrt{n/(2n+1)}\,\diam (M)$.}\qed\]
\renewcommand{\qed}{}
\end{proof}

In the case of restricted contact type hypersurfaces, one
obtains the same quantitative estimate on {\em contractible\/}
closed Reeb orbits, cf.~\cite{geze}.
\subsection{Cotangent bundles}
The Weinstein conjecture for contact type
hypersurfaces in a cotangent bundle $T^*L$
(with its canonical symplectic structure) is
of particular interest, since this includes the question
of closed characteristics on energy surfaces in classical
mechanical systems. The solution to the existence
question in this classical case is described in
\cite[Chapter~4.4]{hoze94}. Hofer--Viterbo~\cite{hovi88}
proved the existence of closed characteristics on contact type
hypersurfaces in $T^*L$ enclosing the zero section.
Viterbo~\cite[Theorem~3.1]{vite97},
\cite[p.~1020]{vite99} covers the case
where the fundamental group of $L$ is finite.

The following corollary provides new instances of the Weinstein
conjecture in cotangent bundles, and in fact gives the
strong version.

\begin{cor}
\label{cor:cotangent}
The strong Weinstein conjecture holds for closed contact type
hypersurfaces in $T^*(Q\times S^1)$, where $Q$ is
any closed manifold.
\end{cor}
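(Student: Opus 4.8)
The plan is to exhibit, for a given closed contact type hypersurface $M\subset T^*(Q\times S^1)$, a symplectic cobordism satisfying (C1)--(C5) with $M$ as its concave boundary, and then to invoke Corollary~\ref{cor:main}. The key structural observation is that the cotangent bundle splits symplectically as $T^*(Q\times S^1)=T^*Q\times T^*S^1$, and that $T^*S^1=S^1\times\R$ is itself a product with the annulus. The point is to reorganise this as a manifold of the form $(P,\omega_P)\times(\text{disc or }\C)$, where $(P,\omega_P)=(T^*Q\times S^1\times\R,\ldots)$ plays the role of the auxiliary symplectic factor in Subsection~\ref{subsection:S}: we need a plurisubharmonic potential on $P$. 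A natural choice is an almost complex structure on $T^*Q$ compatible with the canonical symplectic form together with the standard complex structure on the annulus $T^*S^1\cong\C^*$, using (say) the function $\psi_P=|p|^2+(\log|w|)^2$ or a similar exhausting strictly plurisubharmonic function; one must check that this is genuinely plurisubharmonic for a suitable tame $J_P$, which reduces to the standard fact (cf.\ the discussion of Stein manifolds in the excerpt) that cotangent bundles carry such potentials, combined with the product construction $\psi:=\psi_P+|z|^2/4$ on $P\times B_1$.

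Concretely I would proceed as follows. First, fix the compact hypersurface $M$; since $T^*(Q\times S^1)$ has trivial homology in codimension one (its homotopy type is that of $Q\times S^1$, which has no top-dimensional reduced homology in the relevant degree once one notes $M$ is separating --- or one restricts to the separating case, the non-separating case being handled as in the proof of Corollary~\ref{cor:Stein} by a homological argument), $M$ bounds a compact region. Second, realise the ambient manifold as $P\times\C$ (or $P\times B_1$ after rescaling) with $P$ a Stein-type symplectic manifold admitting a plurisubharmonic potential, exactly as in Subsection~\ref{subsection:S}, so that the relevant sphere/level-set component $S=\psi^{-1}(c)$ of the convex type is available. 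Third, choose a regular sublevel set of $\psi$ enclosing $M$ and let $W$ be the region between $M$ and this level set; then (C1) (exactness and hence $\kappa$-semipositivity in the Liouville case, since $\omega_{\mathrm{can}}=\rmd\lambda_{\mathrm{can}}$ is exact and its restriction to $M$ is the contact form $i_Y\omega|_{TM}$), (C2)--(C3) (with $M_+=\emptyset$, after checking the Liouville vector field points inward along $M$), (C5) all hold; (C4) is vacuous. The orientation of the Liouville vector field along $M$ is forced to point into $W$ --- otherwise Theorem~\ref{thm:McDuff} would be contradicted, exactly as in the proof of Corollary~\ref{cor:Stein}. Finally, apply Corollary~\ref{cor:main} to conclude that $(M,\xi)$ satisfies the strong Weinstein conjecture.

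The main obstacle, and the only point requiring genuine care, is the second step: producing an honest strictly plurisubharmonic potential on the factor $P=T^*Q\times T^*S^1$ (equivalently on $T^*(Q\times S^1)$ presented as $P\times\{\text{point}\}$, with the remaining $\C$-factor extracted from $T^*S^1$) with the split almost complex structure $J_P\oplus i$ matching the framework of Subsection~\ref{subsection:S}. Here one uses that $T^*Q$ is a Stein manifold (indeed it carries an exhausting strictly plurisubharmonic function for a compatible almost complex structure --- e.g.\ via a real-analytic embedding of $Q$ and Grauert's theorem, or directly via a generalised Legendre-type construction), while the annulus $T^*S^1\cong\C^{*}$ carries the obvious one, say $(\log|w|)^2/4$; the product of strictly plurisubharmonic functions' sum is again strictly plurisubharmonic for the product structure, and after composing with a convex increasing function one obtains the required exhausting potential with regular level sets. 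Once this $P$ is in hand the remainder is a verbatim repetition of the cobordism construction already used for Corollary~\ref{cor:Stein}, and no new holomorphic-curve input is needed beyond Theorem~\ref{thm:main} itself.
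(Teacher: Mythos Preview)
Your overall strategy---embed the ambient into a manifold of the form $P\times\C$ carrying a strictly plurisubharmonic potential, take the region between $M$ and a high level set $S$ as the cobordism $W$, and invoke Corollary~\ref{cor:main}---is exactly the paper's approach. But your identification of $P$ is wrong, and as written the argument does not make dimensional sense.

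You set $P=T^*Q\times S^1\times\R=T^*Q\times T^*S^1$, which is already the whole ambient manifold $T^*(Q\times S^1)$; then $P\times\C$ would have dimension $2n+2$, not $2n$, and $M$ could not sit in it as a hypersurface. The correct choice is $P=T^*Q$ (or rather a suitable compact sublevel set therein), with the $\C$-factor coming from $T^*S^1$ itself. The paper does this via the explicit symplectic embedding of the half-cylinder $\{s>-a/2\}\subset(T^*S^1,\rmd s\wedge\rmd\theta)$ into $(\C,\omega_{\st})$ given by $(s,\theta)\mapsto\sqrt{2s+a}\,\rme^{i\theta}$, with $a$ large enough that $M$ lies in $T^*Q\times\{s>-a/2\}$. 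Once this is in place, the plurisubharmonic potential on $P=T^*Q$ comes from a Riemannian metric on $Q$ (the paper cites \cite[Appendix~B]{nied06}), and your discussion of $(\log|w|)^2$ on $\C^*$ becomes irrelevant: there is no need for a potential on $T^*S^1$, since that factor has been absorbed into the $\C$-direction. With $P$ correctly identified, the rest of your outline (use of Theorem~\ref{thm:McDuff} to orient the Liouville field, then Corollary~\ref{cor:main}) is fine and matches the paper.
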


\begin{proof}
Let $(M,\xi)\subset T^*(Q\times S^1)=T^*Q\times T^*S^1$
be a closed hypersurface of contact type. We want to show that
we can realise $(M,\xi)$ as a hypersurface of contact type
in a symplectic manifold of the form $P\times\C$ with $P$ as
in Section~\ref{subsection:S}. The cobordism $W$ will then be defined
by the part of $P\times\C$ between $M$ and a sufficiently high
level set $S$ of the strictly plurisubharmonic potential
on $P\times\C$. Then we choose $R$ large enough such that
$S\subset P\times B_R$ and apply Theorem~\ref{thm:main}. For the
qualitative statement of the corollary it is irrelevant that we need to
replace $B_1$ in the original formulation of the main theorem
by~$B_R$.

The canonical symplectic form on $T^*S^1\cong\R\times S^1$
is given by $\rmd s\wedge \rmd\theta$. The compact hypersurface $M$
lies in $T^*Q\times\{s>-a/2\}\subset T^*Q\times T^*S^1$ for $a>0$
large enough, and a symplectic embedding of $\{s>-a/2\}\subset T^*S^1$
into $(\C,\omega_{\st}=r\, \rmd r\wedge\rmd\theta)$ is given by
$(s,\theta)\mapsto \sqrt{2s+a}\,\rme^{i\theta}$.

In order to finish the proof, we need to equip $T^*Q$ with
an almost complex structure and an exhausting strictly plurisubharmonic
function. In \cite[Appendix~B]{nied06} it is
explained how this can be done, starting from
a Riemannian metric on~$Q$.
\end{proof}

\begin{rem}
Since the symplectic embedding $\{ s>-a/2\}\hookrightarrow\C$
is not surjective, the image of a hypersurface of restricted
contact type will only be of non-restricted contact type,
in general. So our argument does not allow us to make any statement
about contractible periodic Reeb orbits.
\end{rem}

Notice that a closed contact type hypersurface in a symplectic manifold
of the form $P\times\C$ is displaceable. In this situation,
a result of Frauenfelder--Schlenk~\cite[Theorem~3]{frsc07}
predicts the existence of a closed characteristic.
\subsection{Separating hypersurfaces}
In \cite{abw10}, Albers et al.\ collect conditions on
a $3$-dimensional contact manifold that prevent the
existence of non-separating contact type embeddings into
any $4$-dimensional symplectic manifold. Our main theorem
and its consequences for the quantitative Reeb dynamics
allow us to make such statements in higher dimensions, e.g.\
for spheres and ellipsoids. However, as we need to control the
Reeb dynamics, we need to fix the induced contact form on the
hypersurface.

Here is a simple example, pointed out to us by Max
D\"orner. Consider the standard Liouville form $\lambda_{\st}
=(\bfx\,\rmd\bfy -\bfy\,\rmd\bfx)/2$ on~$\R^{2n}$, and write
$\alpha_r$ for its restriction to
the tangent bundle of the sphere $S_r:=S^{2n-1}_r$ of radius~$r$.

Recall from Section~\ref{subsection:capacities} that when
we speak of a contact type embedding of $(S_r,\alpha_r)$
into a symplectic manifold $(V,\omega)$,
the contact form $\alpha_r$ being given {\em a priori}, we mean
that there is a Liouville vector field $Y$ for $\omega$
defined near and transverse to $S_r\subset V$ with
$i_Y\omega|_{TS_r}=\alpha_r$.

\begin{prop}
Any contact type embedding of $(S_r,\alpha_r)$
into a closed $\kappa$-semi\-positive symplectic manifold
$(V,\omega)$ with $\pi r^2\leq\kappa$ is separating.
\end{prop}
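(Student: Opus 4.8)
The plan is to argue by contradiction, using the cut-and-cap philosophy of the proof of Corollary~\ref{cor:Stein}. Suppose $j\co(S_r,\alpha_r)\hookrightarrow(V,\omega)$ is a non-separating contact type embedding, with Liouville field $Y$ defined near and transverse to $j(S_r)$. I will produce from this a symplectic cobordism to which Theorem~\ref{thm:main} applies, with the constant from the accompanying Remark taken to be $\pi r^2$; the theorem then yields a nullhomologous Reeb link in $(S_r,\alpha_r)$ of total action strictly less than $\pi r^2$. This is absurd, because the Reeb flow of $\alpha_r$ on $S_r$ is a reparametrisation of the Hopf flow, so every closed Reeb orbit is a multiple of a Hopf fibre and hence has period a positive integer multiple of $\pi r^2$; in particular any non-empty Reeb link on $(S_r,\alpha_r)$ has total action at least $\pi r^2$.

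For the construction, recall that $j(S_r)$ is coorientable (it carries the contact structure $\ker\alpha_r$), so a neighbourhood of it in $V$ is symplectomorphic to $\bigl((-\varepsilon,\varepsilon)\times S_r,\rmd(\rme^t\alpha_r)\bigr)$ with $Y=\partial_t$. Cutting $V$ open along $j(S_r)$ produces a compact manifold $W$, connected precisely because the embedding is non-separating, whose boundary consists of two copies of $S_r$: one component $M$, with collar $[0,\varepsilon)\times S_r$, along which $Y$ points into $W$, and one component, with collar $(-\varepsilon,0]\times S_r$, along which $Y$ points out of $W$. Both carry the contact form $i_Y\omega|_T=\alpha_r$, so $M$ is a strong concave boundary. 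I then shrink the convex side: for a small $t_0<0$, let $W'\subset W$ be obtained by deleting the outer collar segment $(t_0,0]\times S_r$. Then $W'$ is a compact connected symplectic cobordism whose concave boundary is $(M,\alpha_r)=(S_r,\alpha_r)$ and whose other (convex) boundary component is $(\{t_0\}\times S_r,\rme^{t_0}\alpha_r)$; the radial rescaling $\bfz\mapsto\rme^{t_0/2}\bfz$ identifies the latter with $(S_{r'},\alpha_{r'})$ where $r':=r\rme^{t_0/2}<r$. There is no further boundary component, so $M_+=\emptyset$.

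It remains to check conditions (C1)--(C5) for $W'$ with the constant $\pi r^2$ in place of $\pi$; by the Remark after Theorem~\ref{thm:main} this means rescaling $\omega_{\FS}$ to total integral $\pi r^2$ and replacing $B_1$ by $B_r$. For (C5): taking $P=\overline{B^{2n-2}_\rho}$ with $\rho$ large, $J_P=i$ and $\psi_P=|w|^2/4$, the function $\psi=\psi_P+|z|^2/4=(|w|^2+|z|^2)/4$ on $P\times B_r$ has $\psi^{-1}((r')^2/4)=S_{r'}$ as a regular level set contained in $\Int(P)\times B_r$ (since $r'<r$ and $r'<\rho$), with induced contact form $-\rmd\psi\circ J_C|_T=\lambda_{\st}|_T=\alpha_{r'}$; so the convex component of $W'$ genuinely is a boundary component of the type described in Section~\ref{subsection:S}. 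For (C1): a spherical class $A\in H_2(W')$ maps under the natural map $W'\to V$ to a spherical class of the same $\omega$-area and the same $c_1$-value (computing $c_1$ from an $\omega$-tame almost complex structure on $V$ restricted to $W'$), so if $0<\omega(A)<\pi r^2\le\kappa$ and $c_1(A)\geq3-n$ then $\kappa$-semipositivity of $V$ forces $c_1(A)\geq0$; thus $W'$ is $\pi r^2$-semipositive. Conditions (C2)--(C4) are immediate. Now Theorem~\ref{thm:main}, applied to $W'$, produces a nullhomologous Reeb link in its concave end $(S_r,\alpha_r)$ of total action smaller than $\pi r^2$, which is the contradiction announced above. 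Hence $j(S_r)$ is separating.

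The step I expect to require the most care is precisely the bookkeeping in conditions (C1)--(C5) for $W'$: confirming that the slightly shrunken sphere $(S_{r'},\alpha_{r'})$ occurs, with the correctly normalised Reeb period, as a convex boundary component of the model of Section~\ref{subsection:S} at the parameter value $\pi r^2$, so that the quantitative conclusion of Theorem~\ref{thm:main} reads exactly ``total action less than $\pi r^2$'', and that $\pi r^2$-semipositivity of $W'$ is correctly inherited from the $\kappa$-semipositivity of $V$ via the map $W'\to V$. By comparison, the purely topological input --- that cutting along a non-separating, cooriented contact type hypersurface yields a connected cobordism with one concave and one convex end, both carrying the ambient contact form --- is routine.
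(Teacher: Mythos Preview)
Your argument is correct and follows the same strategy as the paper's proof: assume a non-separating contact type embedding, cut $V$ open to obtain a connected symplectic cobordism from a sphere of radius at most $r$ (convex end) to a sphere of radius at least $r$ (concave end), invoke Theorem~\ref{thm:main} with the constant $\pi r^2$, and derive a contradiction from the fact that every Reeb orbit on $(S_r,\alpha_r)$ has period a positive multiple of~$\pi r^2$. The only cosmetic difference is that the paper removes an entire open shell between $S_{r-\varepsilon}$ and $S_{r+\varepsilon}$ (so the concave end is $(S_{r+\varepsilon},\alpha_{r+\varepsilon})$ and the convex end is $(S_{r-\varepsilon},\alpha_{r-\varepsilon})$), whereas you cut along $S_r$ itself and then trim only the convex side; your verification of (C1) and (C5) is more explicit than the paper's, but the content is identical.
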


\begin{proof}
Suppose we have a non-separating contact type embedding of
$(S_r,\alpha_r)$ into $(V,\omega)$.
Then a neighbourhood of $S_r\subset (V,\omega)$
looks like a neighbourhood of the sphere of radius $r$
in $(\R^{2n},\omega_{\st}=\rmd\lambda_{\st})$.
Remove an open tubular neighbourhood around $S_r\subset V$
corresponding to the shell between $S_{r-\varepsilon}$
and $S_{r+\varepsilon}$ in the euclidean model, where
$\varepsilon >0$ has been chosen sufficiently small.
This defines a symplectic cobordism
$(W,\omega)$ from $(S_{r+\varepsilon},\alpha_{r+\varepsilon})$
to $(S_{r-\varepsilon},\alpha_{r-\varepsilon})$
satisfying the assumptions of Theorem~\ref{thm:main},
where in (C1) we replace $\pi$-semipositivity by
$\pi r^2$-semipositivity. This tells us that there should be a Reeb link in
$(S_{r+\varepsilon},\alpha_{r+\varepsilon})$
of total action less than~$\pi r^2$. But all the simple Reeb orbits
of $(S_r,\alpha_r)$ are closed of period $\pi r^2$,
cf.~\cite{geze}. This contradiction proves the proposition.
\end{proof}

\begin{rem}
The same argument applies to any ellipsoid whose minimal
half-axis satisfies the inequality in the proposition.
\end{rem}
\section{Completing the symplectic cobordism} 
\label{section:completing}
We now begin with the preparations for the proof of the main theorem.
We define a `completion' of our symplectic cobordism
$W$ which contains holomorphic spheres, and we describe
some simple properties of these holomorphic spheres.
\subsection{The symplectic cap}
\label{subsection:cap}
The initial step in the proof of Theorem~\ref{thm:main} is
analogous to the arguments of McDuff in~\cite{mcdu91}.
We complete $W$ by attaching a negative half-symplectisation along~$M$,
as in Section~\ref{subsection:main}, and a (perforated) symplectic cap
$(C_{\infty},\omega_C)$ along~$S$, where $C_{\infty}$
is the closure of the component of $C\setminus S$ containing
$P_{\infty}$ (for the notation cf.\ Section~\ref{subsection:S}).

By gluing the convex boundary $(S,\alpha_S)$ of $(W,\omega)$
and the concave boundary $(S,\alpha_S)$ of $(C_{\infty},\omega_C)$
we obtain the symplectic manifold
\[ (\wtW,\tilde{\omega}):=
\bigl( (-\infty,0]\times M\cup_M W,\omega_-\bigr)\cup_S
(C_{\infty},\omega_C);\]
see Figure~\ref{figure:open-cap}.

\begin{figure}[h]
\labellist
\small\hair 2pt
\pinlabel $W$ at 388 231
\pinlabel $C_{\infty}$ at 690 425
\pinlabel $M$ at 116 237
\pinlabel $M_+$ [l] at 663 90
\pinlabel $S$ [b] at 605 458
\pinlabel $P_{\infty}$ [r] at 747 359
\pinlabel ${\partial P\times\CP^1}$ [l] at 905 355
\endlabellist
\centering
\includegraphics[scale=0.32]{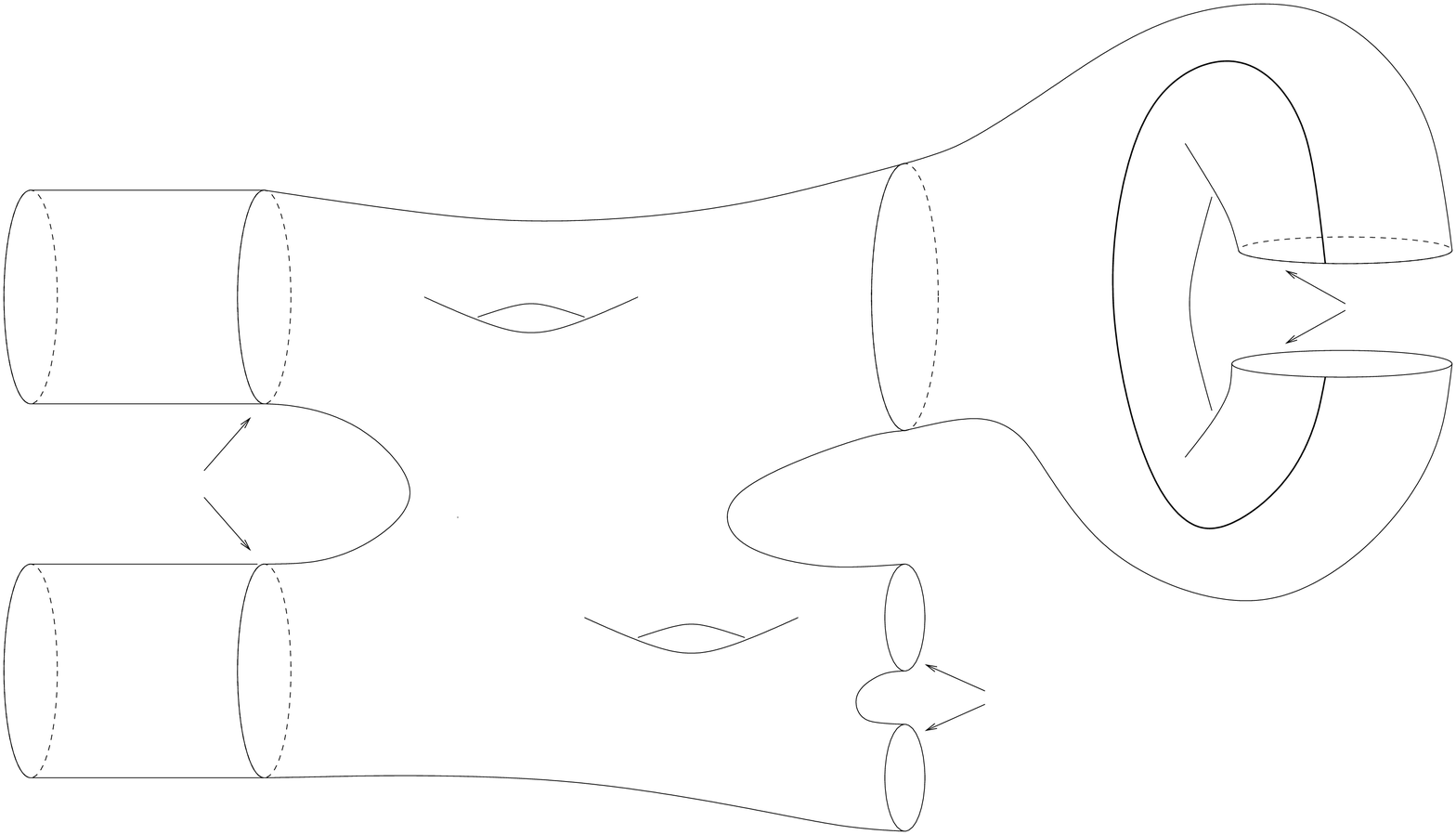}
  \caption{The symplectic manifold $(\wtW,\tilde{\omega})$.}
  \label{figure:open-cap}
\end{figure}

\begin{rem}
\label{rem:Morse}
Notice that the Morse index of a non-degenerate critical point
of a plurisubharmonic function is at most half the dimension
of the almost complex manifold. Otherwise the negative
definite subspace of the Hessian at the critical point would contain
a complex line, and by \cite{niwo63} we could then find
a local holomorphic curve tangent to that line. Such a curve would
violate the maximum principle. It follows that if $\psi_P$
is a Morse function, then the $(2n-2)$-dimensional manifold $P$
has the homotopy type of a complex of dimension at most $n-1$.
The homology exact sequence of the pair $(P,\partial P)$
then shows that $\partial P$ is connected when $n\geq 3$.

By a $C^2$-small perturbation compactly supported in $\Int (P)$
we can turn any given $\psi_P$ into a Morse strictly plurisubharmonic
function, so this topological conclusion about $\partial P$
always holds in our set-up. The apparently disconnected
$\partial P$ in Figure~\ref{figure:open-cap} is an artefact
of the lack of dimensions.

The same argument applied to the connected components
of $\psi^{-1}([0,c])$ shows that all these components
have a connected boundary. This means that each component
of the level set $\psi^{-1}(c)$, and hence in particular~$S$
(which is a collection of such components) is separating.
\end{rem}

For more about plurisubharmonic functions on almost
complex manifolds see~\cite{ciel}.
\subsection{The almost complex structure on $\wtW$}
On the symplectic manifold $(\wtW,\tilde{\omega})$ we choose an almost
complex structure $J$ tamed by~$\tilde{\omega}$, subject to the
following conditions:
\begin{itemize}
\item[(J1)] On $C_{\infty}\subset C$, the almost complex structure $J$
equals the split structure $J_P\oplus i$.
\item[(J2)] On the cylindrical end $(-\infty,0]\times M$,
the almost complex structure $J$ is cylindrical
and symmetric in the sense of \cite[p.~802, 807]{behwz03}, i.e.\ it
preserves $\xi=\ker\alpha$ and satisfies $J\partial_s=R_{\alpha}$.
\item[(J3)] On a neighbourhood of~$M_+$, the almost complex structure $J$
equals~$J_+$ (cf.~(C4)).
\item[(J4)] Outside the regions described in (J1) to (J3),
the almost complex structure is chosen so as to satisfy certain
genericity assumptions that will be described later.
\end{itemize}
\subsection{Holomorphic spheres in $(\wtW,J)$}
Before defining and studying the moduli space of holomorphic spheres
in $(\wtW,J)$ representing a certain homology class, we collect
some information about more general holomorphic spheres that will
be relevant in the bubbling-off analysis.

The $J$-convexity of $M_+$
allows one to write $M_+$ as a level set of a strictly plurisubharmonic
function defined in some collar neighbourhood $U_+$
of~$M_+$, cf.\ \cite[Remark~4.3]{geze}. Then the maximum principle
holds for $J$-holomorphic curves in~$U_+$.

Recall from Section~\ref{subsection:S} that $S$ is a collection of
components of a regular level set $\psi^{-1}(c)$.
Define a closed neighbourhood of $\partial P\times\CP^1$ by
\[ U_{\partial}:=\{ p\in P\co \psi_P (p)\geq c\} \times \CP^1
\subset C_{\infty} .\]
This neighbourhood is obviously foliated by holomorphic spheres
$\{ p\}\times\CP^1$.

\begin{lem}
\label{lem:spheres}
Let $u\co\CP^1\rightarrow\wtW$ be a smooth non-constant $J$-holomorphic
sphere.
\begin{itemize}
\item[(i)] If $u(\CP^1)\cap C_{\infty}\neq\emptyset$, then
$u(\CP^1)\cap P_{\infty}\neq\emptyset$.
\item[(ii)] If $u(\CP^1)\cap U_{\partial}\neq\emptyset$,
then $u(\CP^1)\subset U_{\partial}$ and $u$ is of the form
$z\mapsto (p,v(z))$ with some holomorphic
branched covering $v$ of $\CP^1$ by itself.
\item[(iii)] If $u(\CP^1)\subset C_{\infty}$, then $u$ is one of
the spheres in {\rm (ii)}.
\item[(iv)] $u(\CP^1)\cap U_+=\emptyset$.
\end{itemize}
\end{lem}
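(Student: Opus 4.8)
The plan is to prove the four items essentially from the maximum principle, applied on the various regions where a plurisubharmonic function is available, together with the structural description of the split part $C_\infty = P\times\CP^1$.

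For item (i): on $C_\infty$ we have the strictly plurisubharmonic function $\psi_P$ (pulled back from $P$), which has no critical points of too-high index and attains its maximum only on $\partial P$; more to the point, composing $u$ with $\psi_P$ on the part of $\CP^1$ mapped into $C_\infty$ gives a subharmonic function. The key observation is that $\psi$ (or rather $\psi_P$, the $\CP^1$-direction being the one that is compactified) has the maximum principle, so a holomorphic sphere meeting $C_\infty$ but not $P_\infty$ would live in a region where $\psi_P$ is bounded, and, looking at the $\CP^1$-factor, the projection to $P$ would be a holomorphic map $\CP^1\to P$; since $\psi_P$ restricted to such a (non-singular, or branched) curve is subharmonic on a compact domain without boundary, it is constant, so $\mathrm{pr}_P\circ u$ is constant and $u$ maps into a fibre $\{p\}\times\CP^1$. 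If that fibre does not contain $\infty$ then $u$ misses $P_\infty$ but also $u$ is a branched cover of $\{p\}\times\CP^1$, which does meet $P_\infty$ — contradiction. So the real content of (i) is: any sphere meeting $C_\infty$ has constant $P$-component, hence lies in a fibre, hence meets $P_\infty$. I would phrase (i), (ii), (iii) together via this single fibre argument.

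For item (ii): if $u$ meets $U_\partial = \{\psi_P\ge c\}\times\CP^1$, then near there $\psi_P\circ\mathrm{pr}_P\circ u$ is subharmonic and, by the previous paragraph, $\mathrm{pr}_P\circ u$ is already forced to be constant (a holomorphic $\CP^1\to P$ with $P$ carrying a plurisubharmonic exhaustion-type function is constant), say equal to $p$; and since $u$ meets the slab $\psi_P(p)\ge c$, we get $p\in U_\partial$'s base, so $u(\CP^1)\subset\{p\}\times\CP^1\subset U_\partial$. Then $u$ is $z\mapsto(p,v(z))$ with $v\colon\CP^1\to\CP^1$ holomorphic and non-constant, i.e. a branched covering. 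Item (iii) is then immediate: if $u(\CP^1)\subset C_\infty$ then in particular $u$ meets $C_\infty$, so by (i)/the fibre argument $\mathrm{pr}_P\circ u$ is constant and $u$ is a fibre sphere as in (ii). Item (iv) is the purely local maximum-principle statement: near $M_+$ there is a strictly plurisubharmonic function with $M_+$ a regular level set and $M_+$ the convex boundary, so $U_+$ is a region where $\psi$ increases toward $M_+$; a compact holomorphic sphere entering $U_+$ would have $\psi\circ u$ subharmonic attaining an interior maximum — unless constant, in which case it would sit in a level set of $\psi$, but then $u(\CP^1)$ would be a compact holomorphic curve in a contact-type hypersurface, which is impossible (the restriction of $\psi$ being strictly subharmonic there forces the curve to be constant). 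Either way $u$ is constant, contradicting the hypothesis.

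The main obstacle, and the point I would be most careful about, is making precise the step "a holomorphic map $\CP^1\to P$ is constant." This is not literally the maximum principle since $P$ has boundary and $\psi_P$ is only defined on $P$; the honest statement is that $\psi_P\circ(\mathrm{pr}_P\circ u)$ is a subharmonic function on $\CP^1$ (using that $\mathrm{pr}_P\circ u$ is $J_P$-holomorphic wherever $u$ lands in $C_\infty$, and handling the locus where $u$ leaves $C_\infty$ — there $u$ has crossed $S$, hence $\psi$ has dropped below $c$, so one works with the restriction to the open subset $\{z : u(z)\in C_\infty\}$ and examines boundary behaviour on $\partial P\times\CP^1$ via $U_\partial$), and a bounded subharmonic function on a domain in $\CP^1$ that attains its supremum is constant. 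I would isolate this as the technical heart, citing the maximum-principle discussion for plurisubharmonic functions from Section~\ref{subsection:S} (and \cite{geig08}, \cite{niwo63}) rather than reproving it, and then the branched-covering description of $v$ follows because a non-constant holomorphic self-map of $\CP^1$ is exactly a branched cover.
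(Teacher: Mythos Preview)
Your treatment of (ii) and (iv) is essentially the paper's, and your idea for (iii) is right in spirit. The genuine gap is in~(i), and since you make (ii) and (iii) depend on the ``fibre argument'' you set up for (i), the gap propagates.

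For (i) you claim that a sphere meeting $C_\infty$ has constant $P$-projection and therefore lies in a fibre $\{p\}\times\CP^1$. But $\mathrm{pr}_P$ is only defined on $u^{-1}(C_\infty)$; the hypothesis of (i) does \emph{not} say $u(\CP^1)\subset C_\infty$, and in fact the statement of (i) is strictly weaker than ``$u$ is a fibre sphere''---a sphere in the class $[F]$ will typically meet $P_\infty$, pass through $S$, and spend most of its life in~$W$. Your final paragraph identifies this as the main obstacle but does not resolve it: working on the open set $\Omega=\{z:u(z)\in C_\infty\}$ with $\psi_P\circ u_1$, the boundary values on $u^{-1}(S)$ satisfy only $\psi_P\le c$ (since $\psi=c$ there and $\psi=\psi_P+|z|^2/4$), so the maximum principle gives no contradiction and does not force $u_1$ to be constant. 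Knowing $u_1$ is constant on $\Omega$ would in any case not extend across $u^{-1}(S)$ to a global statement.

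The paper argues differently. Assuming $u$ meets $C_\infty$ but not $P_\infty$, it uses $\psi$ (not $\psi_P$), which is strictly plurisubharmonic on all of $C_\infty\setminus P_\infty$. The maximum principle then forces $u(\CP^1)\cap C_\infty$ into a level set of~$\psi$, leaving two alternatives: either $u(\CP^1)$ lies entirely in the exact symplectic manifold $C_\infty\setminus P_\infty$, or $u(\CP^1)\subset\wtW\setminus\Int(C_\infty)$ and merely touches the convex hypersurface~$S$. Either forces $u$ constant. The paper then proves (ii) by the analogous local argument with $\psi_P$ on $u^{-1}(U_\partial)$, referring back to the case split in~(i).

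There is also a small omission in your (iii): once you know $u(z)=(p,v(z))$ with $v$ non-constant, you still need $\psi_P(p)\ge c$ to match the description in~(ii). The paper observes that if $\psi_P(p)<c$ then $(p,0)\notin C_\infty$, so $v$ would have to miss $0\in\CP^1$ and hence be constant.
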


\begin{proof}
(i) Suppose $u$ is a holomorphic sphere intersecting $C_{\infty}$
but not $P_{\infty}$. Then the strictly plurisubharmonic function $\psi$
is defined on $u(\CP^1)\cap C_{\infty}$ and attains a maximum
in the interior, forcing $u(\CP^1)\cap C_{\infty}$ to lie
in a level set of~$\psi$. So either $u(\CP^1)$ is completely
contained in the exact symplectic manifold $C_{\infty}\setminus P_{\infty}$,
or $u$ is contained in $\wtW\setminus\Int (C_{\infty})$ and touches the
convex boundary $S$ of that manifold. Either alternative forces
$u$ to be constant.

(ii) On the preimage of $U_{\partial}$ we can write $u$ in the
form $u(z)=(u_1(z),u_2(z))\in P\times\CP^1$ with $u_1$
a $J_P$-holomorphic and $u_2$ a holomorphic function.
Thanks to the strictly plurisubharmonic
function $\psi_P$ on~$P$, the maximum principle applies to~$u_1$,
so as in (i) we argue that $u$ must be globally of the
form $z\mapsto (p,u_2(z))$. The non-constant holomorphic
map $u_2$ is a branched covering $\CP^1\rightarrow\CP^1$.

(iii) A sphere $u$ with image contained in $C_{\infty}$ can be
globally written as in (ii), that is, $u(z)=(p,u_2(z))$.
If $\psi_P(p)\geq c$, then this is one of the spheres from~(ii).
If $\psi_P(p)<c$, the point $(p,0)\in P\times B_1\subset P\times\CP^1$
does not lie in $C_{\infty}$, so $u_2$ is not surjective, and hence
constant.

(iv) If $u$ is a holomorphic sphere intersecting~$U_+$, then
with the strictly plurisubharmonic function defined on $U_+$
we argue as in (i) that $u$ must be constant.
\end{proof}

\begin{rem}
If $M_+$ is only weakly $J$-convex, which means that it is
the level set of a (non-strictly) plurisubharmonic function,
then the maximum principle still applies, but
there can be non-constant holomorphic spheres entirely contained
in a level set. However, such spheres cannot occur in
a bubble tree arising as the limit of spheres in the
moduli space considered in the next section, because one
sphere in such a bubble tree, as we shall see,
always intersects~$P_{\infty}$, so at least one sphere
would touch a level set but not be entirely contained in it.
So the results of the present paper remain valid under this weaker
assumption on~$M_+$.
\end{rem}
\section{The moduli space of holomorphic spheres}
\label{section:moduli}
In this section we define the relevant moduli spaces of holomorphic
spheres in the completed symplectic cobordism. These moduli spaces
will be shown to be non-compact, which then leads to a
proof of Theorems~\ref{thm:main} and~\ref{thm:McDuff}.
\subsection{Spheres in a fixed homology class}
Fix a point $*\in\partial P$. Then
\[ F:= \{ *\}\times\CP^1\subset\wtW \]
is a holomorphic sphere in $(\wtW,J)$.
Write $\wtMM$ for the moduli space of smooth $J$-holomorphic spheres
$u\co \CP^1\ra\wtW$ that represent the class $[F]\in H_2(\wtW)$.

The intersection number
of the classes $[F]\in H_2(\wtW)$ and $[P_{\infty}]\in
H_{2n-2}(\wtW,\partial\wtW)$ equals~$1$, so with Lemma~\ref{lem:spheres}
we see that any holomorphic sphere in the class $[F]$ that touches
$U_{\partial}$  or is completely contained in $C_{\infty}$
must be of the form $z\mapsto (p,\phi(z))$
with $\psi_P(p)\geq c$ and $\phi$ an automorphism of $\CP^1$.

\begin{prop}
For a generic choice of $J$, the moduli space $\wtMM$ is a
smooth manifold (with boundary) of dimension $2n+4$.
\end{prop}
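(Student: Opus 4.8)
The proof follows the standard Fredholm/Sard--Smale recipe for moduli of simple $J$-holomorphic spheres; the only non-routine features are the Chern-number computation, which makes the index equal to $2n+4$, and the fact that transversality cannot be forced on the cap $C_\infty$, where (J1) freezes $J=J_P\oplus i$, so the spheres sitting there must be dealt with by hand. I would begin by recording two preliminary facts. First, $c_1([F])=2$: along $F=\{*\}\times\CP^1\subset P\times\CP^1$ the bundle $T\wtW$ splits as $T\CP^1\oplus\underline{T_*P}$ with the normal summand trivial, so $c_1([F])=\langle c_1(T\CP^1),[\CP^1]\rangle=2$. Second, every $u\in\wtMM$ is somewhere injective: it is non-constant since $[F]\neq 0$, and if it factored through a branched covering of $\CP^1$ of degree $d\geq 2$ then $[F]$ would be $d$ times a homology class, contradicting $\langle[P_\infty],[F]\rangle=1$ (recorded just above the proposition).

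Next I would split $\wtMM=\wtMM_\partial\sqcup\wtMM^{\circ}$, where $\wtMM_\partial$ is the set of spheres meeting $U_\partial$. By Lemma~\ref{lem:spheres}(ii)--(iii) the elements of $\wtMM_\partial$ are exactly the product maps $z\mapsto(p,\phi(z))$ with $p\in\{\psi_P\geq c\}$ and $\phi\in\Aut(\CP^1)$, so $\wtMM_\partial$ is canonically parametrised by $\{\psi_P\geq c\}\times\Aut(\CP^1)$, a manifold of dimension $(2n-2)+6=2n+4$ with boundary $\bigl(\psi_P^{-1}(c)\sqcup\partial P\bigr)\times\Aut(\CP^1)$. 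At such a $u$ the linearised Cauchy--Riemann operator $D_u$ splits, because both $J$ and $u$ respect $P\times\CP^1$, into the $\bar\partial$-operator on the trivial bundle $\underline{\C^{\,n-1}}$ over $\CP^1$ together with the linearised operator of $\phi$ in $(\CP^1,i)$; the first is onto since $\underline{\C^{\,n-1}}$ has vanishing $(0,1)$-Dolbeault cohomology, the second since $\mathcal O(2)$ has no higher cohomology. Hence $D_u$ is surjective all along $\wtMM_\partial$, the parametrisation above is a chart, and near $\wtMM_\partial$ the space $\wtMM$ is a smooth $(2n+4)$-manifold whose only boundary comes from the product spheres lying over $\partial P\subset\partial\wtW$; the product spheres over $\psi_P^{-1}(c)$ are interior points, being approached from the other side by non-product spheres that cross $S$ into the interior of $W$.

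For $\wtMM^{\circ}$ I would run the usual scheme: form the universal moduli space over the space of $\tilde{\omega}$-tame almost complex structures satisfying (J1)--(J3); by the second preliminary fact every $u\in\wtMM^{\circ}$ is somewhere injective, and by Lemma~\ref{lem:spheres} such a $u$ is not contained in $C_\infty$ yet meets $P_\infty$ (since $[F]\cdot[P_\infty]=1$) and does not touch $U_\partial\cup U_+$, so it must cross $S$ and enter the open region where $J$ is unconstrained; there the universal linearised operator is onto, and Sard--Smale yields a comeagre set of admissible $J$ for which $D_u$ is surjective at every $u\in\wtMM^{\circ}$. For such a $J$, elliptic regularity identifies the Sobolev and $C^\infty$ moduli spaces and $\wtMM^{\circ}$ is a boundaryless smooth manifold of dimension $\operatorname{ind}D_u=n\chi(\CP^1)+2c_1([F])=2n+4$. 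Choosing $J$ in the (non-empty) intersection of this comeagre set with the conditions of the previous paragraph and gluing the two local models proves the proposition.

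The main obstacle is precisely the portion of $\wtMM$ where $J$ is frozen: one must replace the generic perturbation by the explicit transversality of product curves and by the maximum-principle dichotomy of Lemma~\ref{lem:spheres}, and then verify that the two charts fit together into a manifold with boundary exactly $\partial P\times\Aut(\CP^1)$. The remaining ingredients --- the Banach manifold structures, the Fredholm index bookkeeping, and elliptic regularity --- are routine and can be quoted from \cite{mcsa04}.
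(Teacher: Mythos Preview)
Your proof is correct and follows essentially the same strategy as the paper's. Both arguments (i) use the intersection number $[F]\cdot[P_\infty]=1$ to deduce simplicity, (ii) handle the spheres lying in $C_\infty$ by an explicit regularity check, (iii) observe that every remaining sphere has an injective point in the region where $J$ is unconstrained, so that the Sard--Smale argument applies, and (iv) read off $c_1([F])=2$ from the product structure along~$F$. The paper is terser: for (ii) it simply cites \cite[Corollary~3.3.5]{mcsa04} in place of your Dolbeault splitting computation, and for (iii) it invokes \cite[Remark~3.2.3]{mcsa04} rather than tracing the sphere across $S$ into~$W$. Your explicit discussion of why the product spheres over $\psi_P^{-1}(c)$ are interior points of $\wtMM$ (so that the only boundary is $\partial P\times\Aut(\CP^1)$) is absent from the paper but is a useful clarification of what ``manifold with boundary'' means here.
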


\begin{proof}
The observation before the proposition tells us that near
its boundary the moduli space $\wtMM$ looks like a neighbourhood
of $\partial P\subset P$ crossed with the $6$-dimensional automorphism
group $\Aut (\CP^1)$, i.e.\ like a manifold with boundary of the
claimed dimension. So for all practical purposes we can
apply transversality arguments as if $\wtMM$ had no boundary.
A further consequence of the homological intersection of
$[F]$ and $[P_{\infty}]$ being equal to $1$ is that all spheres in the
class $[F]$ will be simple (i.e.\ not multiply covered).

The moduli space $\wtMM$ will be a manifold provided we can choose
$J$ to be regular in the sense of~\cite[Definition~3.1.4]{mcsa04},
i.e.\ such that the linearised Cauchy--Riemann operator $D_u$
is surjective for each $u\in\wtMM$.

As to (J1), regularity for spheres contained in $C_{\infty}$
follows from their explicit description given
before the proposition, cf.\ \cite[Corollary~3.3.5]{mcsa04}.

In the regions where the choice of
$J$ is prescribed by conditions (J2) and (J3), the maximum principle
applies, so no non-constant holomorphic sphere can lie entirely in one
of these regions. By \cite[Remark~3.2.3]{mcsa04},
the freedom of choosing $J$ in the complementary region then
suffices to achieve regularity for all holomorphic spheres
in~$\wtW$, cf.\ \cite[Remark~4.1.(2)]{geze}.

By \cite[Theorem~3.1.5]{mcsa04}, the dimension of the manifold
$\wtMM$ is given by $2n+2c_1([F])$. Since the normal bundle
of $F$ in the product manifold $P\times\CP^1$ is trivial (as a complex
bundle), $c_1([F])$ equals the Euler characteristic of the
sphere~$F$.
\end{proof}

The quotient space 
\[ \MM :=\wtMM\times_{\Aut (\CP^1)} \CP^1\]
of $\wtMM\times\CP^1$ under the diagonal
action of the $6$-dimensional automorphism group
$\Aut (\CP^1)=\mathrm{PGL}(2,\C )$, where
$\phi\in\Aut (\CP^1)$ acts by
\[ (u,z)\longmapsto (u\circ\phi^{-1},\phi(z)),\]
is then a $2n$-dimensional manifold, since the spheres in
$\wtMM$ being simple implies that this action is free.
Furthermore, there is a well-defined evaluation map
\[ \begin{array}{rccc}
\ev\co & \MM   & \lra        & \wtW\\
       & [u,z] & \longmapsto &  u(z),
\end{array}\]
where $[u,z]$ denotes the class represented by $(u,z)$.
\subsection{Spheres intersecting an arc}
\label{subsection:arc}
Let $\gamma$ be a proper embedding of the interval $[0,1]$
or $[0,1)$ into $\wtW$, with $\gamma(0)\in F$ and no other
image point of $\gamma$ in $\partial P\times\CP^1$.
So in the case of the closed interval $[0,1]$ we must have
$\gamma(1)\in M_+$; in the
case of the half-open interval $[0,1)$ we go to $-\infty$
in the cylindrical end $(-\infty ,0]\times M$ as we approach~$1$.
Set
\[ \MM_{\gamma}:=\ev^{-1}(\gamma), \]
where by slight abuse of notation we identify $\gamma$
with its image in~$\wtW$.

\begin{prop}
\label{prop:Mgamma}
Given $\gamma$, a generic choice of~$J$ can be made such that
$\MM_{\gamma}$ is a $1$-dimensional manifold including one component
diffeomorphic to a half-open interval. In particular,
$\MM_{\gamma}$ is not compact.
\end{prop}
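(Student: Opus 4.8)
The plan is to realize $\MM_\gamma$ as a manifold by a parametric transversality argument, and then to prove non-compactness by a boundary-counting/connectedness argument in the style of Hofer and McDuff, extracting a Reeb orbit (or a limiting holomorphic building touching $M_+$) from any hypothetical failure of compactness.

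First I would set up the transversality. For a generic path $\gamma$ and generic $J$ (extending the genericity already invoked for $\wtMM$ in the previous proposition), the evaluation map $\ev\co\MM\to\wtW$ is transverse to $\gamma$; since $\dim\MM=2n$ and $\gamma$ is $1$-dimensional in a $2n$-dimensional manifold, $\MM_\gamma=\ev^{-1}(\gamma)$ is a $1$-manifold, possibly with boundary. Its boundary comes from two sources: the endpoint $\gamma(0)\in F$, and the boundary $\partial\MM$ inherited from $\partial\wtMM$ (the locus of spheres of the form $z\mapsto(p,\phi(z))$ with $p\in\partial P$). The key local computation is at $\gamma(0)$: by the earlier observation, the only holomorphic spheres in class $[F]$ passing through the point $*\in\partial P\subset P_\infty$ are $F$ itself and its reparametrizations, i.e.\ the points of $\MM$ lying over $F$ form a single $\CP^1$'s worth (one sphere, all marked points), and $\ev$ restricted near this fiber is a local diffeomorphism onto a neighborhood of $F$ in $\wtW$. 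Hence $\gamma$ near $\gamma(0)$ has exactly one preimage arc in $\MM$ with one endpoint on $\partial\MM$ (coming from $\gamma(0)\in F\subset P_\infty$), giving precisely one boundary point of $\MM_\gamma$ of this type. Thus $\MM_\gamma$ has an \emph{odd} number of manifold-boundary points in total if it were compact — so at least one component is a half-open (or open) interval, proving non-compactness, \emph{provided} we rule out the naive escape route in which $\MM_\gamma$ is a disjoint union of circles plus arcs whose ends all lie in $\partial\MM$.

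The real content, then, is to show that not \emph{all} the ends of $\MM_\gamma$ can be absorbed into $\partial\MM$, equivalently that the component of $\MM_\gamma$ containing the $\gamma(0)$-boundary point is a half-open interval rather than an arc with its other end again on $\partial\MM$. Here I would use Lemma~\ref{lem:spheres} and the intersection-number bookkeeping: a sphere in $\MM_\gamma$ whose marked point lies on $\gamma(t)$ for $t>0$ is \emph{not} of the boundary type, since such spheres meet $\partial P\times\CP^1$ only in $P_\infty$, whereas $\gamma(t)\notin\partial P\times\CP^1$ for $t>0$ by hypothesis; so $\partial\MM\cap\MM_\gamma$ consists only of the single point over $\gamma(0)$. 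Therefore $\MM_\gamma$ has exactly one manifold-boundary point, and a compact $1$-manifold has an even number of boundary points — contradiction. Hence the component through that point must be a half-open interval, and $\MM_\gamma$ is non-compact.

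The main obstacle I expect is the transversality/regularity step — ensuring that a generic $J$ (subject to the prescribed forms (J1)--(J3) on $C_\infty$, the cylindrical end, and near $M_+$) makes every sphere in $\wtMM$ regular \emph{and simultaneously} makes $\ev\pitchfork\gamma$, while $\gamma$ itself is chosen generically and properly embedded with the stated endpoint behavior. The spheres forced to lie in $C_\infty$ (the $z\mapsto(p,\phi(z))$ family) are regular by the explicit splitting argument already used, and the maximum-principle regions (J2), (J3) contain no non-constant spheres, so the standard argument of \cite[Remark~3.2.3]{mcsa04} applies; one then invokes the usual parametric transversality theorem for the evaluation map, choosing $\gamma$ in the complement of the measure-zero bad set. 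The bubbling/compactness subtleties (what the missing end of $\MM_\gamma$ converges to) are deferred: they are exactly what the \emph{next} section analyzes to extract the Reeb link, so here it suffices to record that $\MM_\gamma$ is a non-compact $1$-manifold with the stated boundary structure.
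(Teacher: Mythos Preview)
Your argument is essentially the paper's own: establish transversality of $\ev$ to $\gamma$ for generic~$J$, show there is exactly one manifold-boundary point (over $\gamma(0)$), and conclude that the component through it is a half-open interval via the parity of boundary points of compact $1$-manifolds. The paper phrases the last step slightly differently---it observes directly that $\ev$ restricts to a diffeomorphism from $\ev^{-1}(U_\partial)$ onto $U_\partial$, so $\MM_\gamma$ near the preimage of $\gamma(0)$ is a copy of $\gamma\cap U_\partial$---but this is the same content.

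Two small corrections. First, the clause ``such spheres meet $\partial P\times\CP^1$ only in $P_\infty$'' is garbled; what you actually need (and what the paper uses) is that a boundary-type sphere $\{p\}\times\CP^1$ with $p\in\partial P$ has its \emph{entire image} in $\partial P\times\CP^1$, so its marked point cannot equal $\gamma(t)$ for $t>0$. Second, in the closed-interval case you should say explicitly that the endpoint $\gamma(1)\in M_+$ contributes no boundary point, since by Lemma~\ref{lem:spheres}(iv) no non-constant holomorphic sphere enters~$U_+$. Finally, the proposition fixes $\gamma$ and varies only~$J$; you need not perturb~$\gamma$, and the paper's argument shows that transversality of $\ev$ to the given $\gamma$ is achievable by generic~$J$ alone (submersivity is automatic over $U_\partial$, and vacuous near~$M_+$).
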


\begin{proof}
Under the identification of $F=\{ *\}\times\CP^1$ with $\CP^1$,
the space $\MM_{\gamma}$ contains the class $[\id_{\CP^1},\gamma (0)]$,
so $\MM_{\gamma}$ is non-empty.

For $\MM_{\gamma}$ to be a $1$-dimensional
manifold (away from potential boundary points), we
need to ensure that the evaluation map $\ev$ be transverse
to the submanifold $\gamma$ of~$\wtW$.
By \cite[Theorem~3.4.1 and Remark~3.4.8]{mcsa04},
for generic $J$ transversality holds for all simple spheres not
contained entirely in a region where $J$ has been fixed by
one of the conditions (J1) to (J3). This is the generic choice
we want to make in (J4). The aforementioned theorem from \cite{mcsa04}
only treats the case without boundary, but it can still
be applied here. Indeed, for the interval $[0,1]$ we have $\gamma(1)\in M_+$,
where by Lemma~\ref{lem:spheres}~(iv) there are no non-constant holomorphic
spheres. A neighbourhood of the boundary point $\gamma(0)$ in $\gamma$
lies in~$U_{\partial}$, where the explicit description of holomorphic spheres
in the class $[F]$ as maps of the form $z\mapsto (p,\phi (z))$
with $\phi\in\Aut (\CP^1)$ shows that at any point
$[u,z]\in\MM$ with $u(z)\in U_{\partial}$ the evaluation map
$\ev\co\MM\ra\wtW$ is submersive.

For $(p,w)\in U_{\partial}\subset P\times\CP^1$,
the preimage $\ev^{-1}(p,w)$ must be of the form
$[(p,\phi),\phi^{-1}(w)]$, where $(p,\phi)$ denotes the holomorphic sphere
$z\mapsto (p,\phi (z))$, so this preimage consists in fact of the
single point $[(p,\id_{\CP^1}),w]$.

So near the preimage of $\gamma(0)$, the moduli space $\MM_{\gamma}$
looks like a single half-open interval, and there are no other boundary
points in $\MM_{\gamma}$. We conclude that the corresponding component is a
half-open interval.
\end{proof}
\subsection{Stable maps}
\label{subsection:stable}
By Proposition~\ref{prop:Mgamma} we can find a sequence
in $\MM_{\gamma}$ without any convergent subsequence.
We now want to show that such a sequence
cannot have any subsequence Gromov-converging to
a stable map in the sense of \cite[Definition~5.1.1]{mcsa04}.
Together with the compactness theorem from symplectic field
theory~\cite{behwz03}
this will imply, for a generic choice of contact form~$\alpha$,
that there has to be a Gromov--Hofer-convergent
subsequence where breaking occurs. This will lead to the existence of
periodic Reeb orbits.

Naively speaking, the non-existence of a Gromov-convergent
subsequence follows from the fact that bubbling is a phenomenon in
codimension at least~$2$ (in the $\pi$-semipositive situation)
--- see the dimension formula for the moduli space
$\MM^*_{T'}(\{ A_{\beta}\})$ in the proof of Lemma~\ref{lem:tree}
below --- and we are only considering a $1$-parameter family
of holomorphic curves.

For the formal argument, suppose $[u_{\nu},z_{\nu}]$ is a sequence in
$\MM_{\gamma}$ with $(u_{\nu},z_{\nu})$ Gromov-convergent to a stable map
$(\{ u_{\alpha}\}_{\alpha\in T},z)$ modelled on a tree~$T$.
Write $e(T)$ for the number of edges of~$T$. We want to show $e(T)=0$,
in which case the limit would be a classical one in the
$C^{\infty}$-topology.

By \cite[Proposition~6.1.2]{mcsa04} we find a simple stable map
$(\{ v_{\beta}\}_{\beta\in T'},z')$, i.e.\ with each
non-constant $v_{\beta}$ a simple map and with
different non-constant spheres having distinct images, such that
\[ \bigcup_{\beta\in T'}v_{\beta}(\CP^1)=
\bigcup_{\alpha\in T}u_{\alpha}(\CP^1). \]
Moreover, with $A_{\beta}\in H_2(\wtW)$ denoting the homology class
represented by the holomorphic sphere $v_{\beta}$,
and with suitable weights $m_{\beta}\in\N$, we have
\[ [F]=\sum_{\beta\in T'}m_{\beta}A_{\beta}.\]

There are two distinguished vertices in the bubble tree $T'$;
these may coincide. One is the bubble $v_{\beta_0}$
corresponding to the limit marked point~$z'$, in particular
$v_{\beta_0}(z')\in\gamma$. Notice that $v_{\beta_0}$
may well be constant, a so-called ghost bubble. This is
the case if the image of $z'$ on $\gamma$ happens to be the image
of a nodal point joining two non-constant bubbles.

The second is the bubble $v_{\beta_{\infty}}$ with
$v_{\beta_{\infty}}(\CP^1)\cap C_{\infty}\neq\emptyset$. We claim that
this property uniquely determines $v_{\beta_{\infty}}$, and
$m_{\beta_{\infty}}=1$. The homological intersection of
$F$ with $P_{\infty}$ equals~$1$, so there has to be at least one
non-constant sphere $v_{\beta}$ intersecting $P_{\infty}$. There are no
non-constant holomorphic spheres contained in $P_{\infty}$
(thanks to the strictly plurisubharmonic function
$\psi_P$ and the maximum principle), so positivity of intersection
with the holomorphic hypersurface $P_{\infty}$,
see~\cite[Proposition~7.1]{cimo07}, tells us that there is a unique
$v_{\beta}$ intersecting~$P_{\infty}$,
and this sphere has to be simple. From Lemma~\ref{lem:spheres}
it then follows that
\[ v_{\beta}(\CP^1)\subset\wtW\setminus C_{\infty}\;\;\mbox{\rm for}\;\;
\beta\neq\beta_{\infty}.\]

\begin{lem}
\label{lem:tree}
The tree $T'$ consists of a single vertex, i.e.\ $e(T')=0$.
\end{lem}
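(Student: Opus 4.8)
The plan is to show that the simple stable map $(\{v_\beta\}_{\beta\in T'},z')$ reduces to a single non-constant sphere by a dimension count, exploiting $\pi$-semipositivity together with the constraint imposed by the arc~$\gamma$. First I would set up the relevant moduli space: for each $\beta\in T'$ with $v_\beta$ non-constant, the space $\MM^*_{\CP^1}(A_\beta;J)$ of simple $J$-holomorphic spheres in class $A_\beta$ has expected dimension $2n+2c_1(A_\beta)$ by \cite[Theorem~3.1.5]{mcsa04}, and for generic~$J$ (as arranged in (J4), using that no non-constant sphere lies entirely in a region where $J$ is pinned by (J1)--(J3)) it is a manifold of that dimension. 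Assembling these with the tree combinatorics and the incidence conditions at the $e(T')$ nodes, the stratum $\MM^*_{T'}(\{A_\beta\})$ of stable maps of the given combinatorial type has dimension
\[ \dim\MM^*_{T'}(\{A_\beta\})= 2n+2c_1([F])+2\sum_{\beta}\bigl(c_1(A_\beta)-?\bigr)\cdots, \]
more precisely the usual formula $2n-6+2c_1(\sum A_\beta)+2\#\{\text{non-constant }\beta\}-2e(T')$ after quotienting by reparametrisation and accounting for the nodal matching conditions; the point is that each extra node beyond the first costs codimension~$2$ once one knows every $A_\beta$ with $0<\omega(A_\beta)<\pi$ and $c_1(A_\beta)\ge 3-n$ has $c_1(A_\beta)\ge 0$ (this is exactly where $\pi$-semipositivity enters, and one must first check $0<\omega(A_\beta)<\pi$ for each such class, which follows from $\omega(\sum m_\beta A_\beta)=\omega([F])=\pi$, positivity of $\omega$ on non-constant $J$-holomorphic spheres, and $m_\beta\ge 1$).

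Next I would impose the evaluation constraint. Since $[u_\nu,z_\nu]\in\MM_\gamma$, the limit marked point satisfies $v_{\beta_0}(z')\in\gamma$, cutting down the dimension by $2n-1$. The resulting space has expected dimension
\[ \dim\MM^*_{T',\gamma}(\{A_\beta\}) \;=\; 1 - 2e(T') + 2\!\!\sum_{\beta:\,v_\beta\text{ non-const}}\!\!\bigl(\text{nonnegative terms}\bigr), \]
where the nonnegativity of the per-bubble contributions is precisely the $\pi$-semipositivity input applied to each $A_\beta$ with $\beta\ne\beta_\infty$ (which lie in $\wtW\setminus C_\infty$, so are genuinely constrained), together with the explicit regularity/dimension computation for the unique bubble $v_{\beta_\infty}$ meeting $C_\infty$ — by the observation preceding Lemma~\ref{lem:spheres}'s use above, this bubble, if it touches $U_\partial$ or sits in $C_\infty$, is of the form $z\mapsto(p,\phi(z))$ and contributes its standard family, and in any case $m_{\beta_\infty}=1$ with $c_1(A_{\beta_\infty})$ controlled. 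For a generic $J$ this expected dimension must be nonnegative if the stratum is nonempty, forcing $e(T')=0$ unless some correction term is strictly positive; but then the same inequality applied in reverse shows the stratum has negative dimension, hence is empty for generic~$J$. Running this for every possible $T'$ with $e(T')\ge 1$ excludes them all, so $e(T')=0$.

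I would also need the small bookkeeping point that $v_{\beta_0}$ may be a ghost bubble; a constant bubble contributes no Chern number and the tree-stability condition (each ghost carries at least three special points) keeps the count honest — this is handled exactly as in \cite[\S6.1]{mcsa04}, so I would just cite it rather than redo it. The main obstacle I anticipate is getting the dimension formula for $\MM^*_{T',\gamma}(\{A_\beta\})$ exactly right, in particular tracking the $-2e(T')$ from the nodal matching against the $+2\,\#\{\text{non-constant bubbles}\}$ from the extra reparametrisation freedom, and verifying that after the $\gamma$-constraint every term except the initial $1$ is manifestly $\le 0$ under $\pi$-semipositivity — equivalently, that bubbling is genuinely codimension $\ge 2$ here. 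Once that inequality
\[ 1 - 2e(T') \;\ge\; \dim\MM^*_{T',\gamma}(\{A_\beta\}) \;\ge\; 0 \]
is in hand for generic~$J$, the conclusion $e(T')=0$ is immediate.
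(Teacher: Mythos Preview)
Your approach is the same as the paper's: a dimension count for the stratum of simple stable maps hitting~$\gamma$, using $\pi$-semipositivity to force $e(T')=0$. However, two steps are not carried out correctly.

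First, your dimension formula is wrong. The term $+2\#\{\text{non-constant }\beta\}$ should not be there; the correct formula (cf.\ \cite[Theorem~6.2.6]{mcsa04}) for unparametrised simple stable maps with one marked point is simply
\[ \dim\MM^*_{T'}(\{A_\beta\}) \;=\; 2n + 2c_1\Bigl(\sum_\beta A_\beta\Bigr) - 4 - 2e(T'). \]
After imposing the $\gamma$-constraint this gives $\dim\ev_0^{-1}(\gamma) = 2c_1(\sum_\beta A_\beta) - 3 - 2e(T')$.

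Second, and more seriously, you never actually bound $c_1(\sum_\beta A_\beta)$. Your vague ``nonnegative terms'' do not do this: knowing $c_1(A_\beta)\ge 0$ for $\beta\ne\beta_\infty$ gives a lower bound on the Chern sum, not the upper bound you need. The missing step is that $m_{\beta_\infty}=1$ together with $c_1(A_\beta)\ge 0$ and $m_\beta\ge 1$ for $\beta\ne\beta_\infty$ yields
\[ c_1\Bigl(\sum_\beta A_\beta\Bigr)
   = c_1(A_{\beta_\infty}) + \sum_{\beta\ne\beta_\infty} c_1(A_\beta)
   \le c_1(A_{\beta_\infty}) + \sum_{\beta\ne\beta_\infty} m_\beta c_1(A_\beta)
   = c_1([F]) = 2, \]
whence $\dim\ev_0^{-1}(\gamma)\le 1-2e(T')$. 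You also need to first deduce $c_1(A_\beta)\ge 3-n$ from regularity and non-emptiness of the simple moduli space in class $A_\beta$ before invoking $\pi$-semipositivity; and you should note explicitly that the classes $A_\beta$ with $\beta\ne\beta_\infty$ are spherical in $H_2(W)$ (not just $H_2(\wtW)$) because those spheres avoid~$C_\infty$, which is why the semipositivity hypothesis on $(W,\omega)$ applies to them.
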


\begin{proof}
The symplectic energy $\int_{\CP^1}u^*\tilde\omega=\tilde\omega([u])$
of any sphere $u\in\wtMM$ equals $\omega_{\FS}([F])=\pi$.
Assuming $e(T')\geq 1$ (which implies that there
are at least two non-constant bubbles), we have
$0<\omega(A_{\beta})<\pi$ for every $\beta\in T'$
with $A_{\beta}\neq 0$.
For $\beta\neq\beta_{\infty}$ we may regard $A_{\beta}$
for homological computations
as a spherical class in $H_2(W)$. Our choice (J4) of almost
complex structure was made so as to guarantee regularity for spheres.
Then the moduli space of simple holomorphic spheres in any class
$A_{\beta}\neq 0$ is a non-empty manifold of dimension $2n+2c_1(A_{\beta})-6$,
which implies $c_1(A_{\beta})\geq 3-n$. So the requirement
that $(W,\omega)$ be $\pi$-semipositive implies
\[ c_1(A_{\beta})\geq 0\;\;\mbox{\rm for}\;\;
\beta\neq\beta_{\infty}.\]

As in the proof of Proposition~\ref{prop:Mgamma}, by possibly
refining the generic choice of almost complex structure in (J4)
we can ensure that the moduli space
$\MM^*_{T'}(\{ A_{\beta}\})$ of unparametrised simple stable maps
modelled on $T'$ (with a single marked point) is a smooth manifold.
As shown in \cite[Theorem~6.2.6]{mcsa04}, the dimension of this
moduli space equals
\[ \dim \MM^*_{T'}(\{ A_{\beta}\})=2n+2c_1\Bigl(\sum_{\beta}A_{\beta}\Bigr)
-4-2 e(T').\]

Now consider the evaluation map
\[ \begin{array}{rccc}
\ev_0\co & \MM^*_{T'}(\{ A_{\beta}\}) & \lra        & \wtW\\[1mm]
         & [\{v_{\beta}\},z']         & \longmapsto & v_{\beta_0}(z').
\end{array} \]
Since transversality of the evaluation map is automatic
at ghost spheres, we see as in Proposition~\ref{prop:Mgamma} that
$\ev_0^{-1}(\gamma)$ is a non-empty manifold of dimension
\begin{eqnarray*}
\dim\ev_0^{-1}(\gamma) & = & \dim \MM^*_{T'}(\{ A_{\beta}\})-(2n-1)\\
  & = & 2c_1\Bigl(\sum_{\beta}A_{\beta}\Bigr)-2 e(T')-3\\
  & = & 2c_1(A_{\beta_{\infty}})+
        2c_1\Bigl(\sum_{\beta\neq\beta_{\infty}}A_{\beta}\Bigr)
        -2 e(T')-3\\
  & \leq & 2c_1(A_{\beta_{\infty}})+
           2c_1\Bigl(\sum_{\beta\neq\beta_{\infty}}m_{\beta}A_{\beta}\Bigr)
           -2 e(T')-3\\
  & = & 2c_1([F])-2 e(T')-3\\
  & = & 1-2e(T').
\end{eqnarray*}
This dimension must be non-negative, hence
$e(T')=0$ after all.
\end{proof}

One of the reasons why a stable map may fail to be
simple is that it might contain several components having
the same image. In this case $e(T)$ would be larger than $e(T')$.
In our situation, however, an intersection argument with
the class $[P_{\infty}]$ as in the proof of
Proposition~\ref{prop:Mgamma} allows us to conclude that $T$,
likewise, must consist of a single vertex $\alpha_{\infty}$, with
$u_{\alpha_{\infty}}$ a simple map.
\subsection{Proof of the main theorem}
As shown in the final paragraph of~\cite{ach05}, by
invoking the Arzel\`a--Ascoli theorem one can reduce the proof
of Theorem~\ref{thm:main} to the case where in addition the contact
form $\alpha$ is assumed to be non-degenerate (i.e.\ where
the linearised Poincar\'e return map along closed orbits of the
Reeb vector field~$R_{\alpha}$, including multiples, does not have
an eigenvalue~$1$). Under this assumption, the compactness theorem
from symplectic field theory \cite[Theorem~10.2]{behwz03} applies.
This tells us that we can find a sequence in $\MM_{\gamma}$
convergent (in the sense of that paper) to a holomorphic building
of height $k_-|1$ with $k_->0$. The lowest level of this
building consists of holomorphic curves in the 
symplectisation $(\R\times M,\rmd(\rme^s\alpha))$ with positive punctures only
and symplectic energy smaller than~$\pi$, for the total symplectic energy
of the holomorphic building, which is a homological invariant,
equals~$\pi$. So these positive punctures constitute a null-homologous
Reeb link in $(M,\alpha)$ of total action smaller than~$\pi$.

Any holomorphic building of genus~$0$ and height $k_-|1$
with $k_->0$ contains at least two finite energy planes.
By the argument in the proof of Lemma~\ref{lem:spheres},
at most one of these can intersect~$P_{\infty}$;
any other finite energy plane in the top level
$W\cup_S C_{\infty}$ of the building stays inside~$W$.
In the Liouville case there can be no finite
energy plane in $W$ with a negative puncture, because by Stokes's
theorem its energy would be negative;
cf.~\cite[Lemma~5.16]{behwz03}. Hence, in the Liouville case
there has to be a finite energy plane with a positive
puncture in one of the lower levels $(\R\times M,\rmd(\rme^s\alpha))$
of the building, corresponding to a contractible
Reeb orbit of period smaller than~$\pi$.
This concludes the proof of Theorem~\ref{thm:main}.
\subsection{Proof of Theorem~\ref{thm:McDuff}}
\label{subsection:McDuff}
Arguing by contradiction, we assume that $M_+$ is not empty.
Then we can choose a path $\gamma$ in $W$ from $F$ to $M_+$
as in Section~\ref{subsection:arc}. Since the concave end $M$ of $W$
is assumed to be empty, the non-compactness of $\MM_{\gamma}$
can only be caused by bubbling, which is precluded by
the argument in Section~\ref{subsection:stable}. In other words,
$\MM_{\gamma}$ would have to be compact, contradicting
Proposition~\ref{prop:Mgamma}.
\subsection{Relation with the work of Oancea--Viterbo}
The recent paper of Oancea--Viterbo~\cite{oavi} also
grew from taking a fresh look at the work of McDuff~\cite{mcdu91}.
Their main interest is the topology of symplectic fillings.
Although their approach seems quite different on
a technical level, one can interpret their results
in our set-up as statements about the special case
where $M$ is empty (and hence so is $M_+$, by Theorem~\ref{thm:McDuff}).

In that special case, one can define the moduli space
$\MM_{-1,1,\infty}$ of holomorphic spheres $u\in\wtMM$
with $u(z)\in P\times\{ z\}$ for $z\in\{ -1,1,\infty\}$.
Requiring the image of three marked points to
lie on three respective hypersurfaces is essentially equivalent
to taking the quotient under the action of the automorphism
group $\Aut(\CP^1)$. For generic $J$ subject to conditions (J1) to (J4)
the moduli space $\MM_{-1,1,\infty}$
turns out to be a {\em compact\/} oriented manifold
(with boundary) of dimension
\[ 2n+2c_1([F])-3\cdot 2=2n-2.\]
Since the holomorphic spheres near the boundary $\partial P\times\CP^1$
of $\wtW$ are the obvious ones by Lemma~\ref{lem:spheres},
one sees that the evaluation map
\[ \ev\co\MM_{-1,1,\infty}\times\CP^1\rightarrow\wtW,\]
which maps boundary to boundary,
is of degree~$1$. The homological arguments in the proof of
\cite[Proposition~2.11]{oavi} go through unchanged,
even though in our set-up we work with manifolds with boundary. In this
way one can derive one of their main theorems~\cite[Theorem~2.6]{oavi}.
The homological conditions imposed there are necessary, in
general, to guarantee the compactness of the moduli space.
Let us point out one special case of their theorem
where these homological conditions are superfluous.

\begin{thm}[Oancea--Viterbo]
Let $S$ be as in Section~\ref{subsection:S} and $(W,\omega)$ a
symplectically aspherical strong symplectic filling of
$(S,\ker\alpha_S)$. Then the homomorphism
$H_j(S;\F)\rightarrow H_j(W;\F)$ on homology with coefficients
in any field~$\F$, induced by the inclusion $S\rightarrow W$,
is surjective in all degrees.\hfill\qed
\end{thm}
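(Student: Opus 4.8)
The plan is to run the evaluation-map argument sketched in the excerpt in the special case at hand --- where $M$ and $M_+$ are empty since $(W,\omega)$ is a filling of $(S,\ker\alpha_S)$, so that $\wtW=W\cup_S C_\infty$ --- and to extract the homology statement from the resulting degree-one evaluation map, following \cite[Proposition~2.11]{oavi}. First I would record that symplectic asphericity of $(W,\omega)$ forces the moduli space $\MM_{-1,1,\infty}$ of holomorphic spheres in the class $[F]$ to be \emph{compact}: in a bubble-tree limit, every non-constant bubble not meeting $P_\infty$ has image in $\wtW\setminus C_\infty=\Int W$ by Lemma~\ref{lem:spheres}, hence represents a spherical class $A$ in $W$ with $\tilde\omega(A)=\omega(A)=0$, impossible for a non-constant holomorphic sphere. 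So $\MM_{-1,1,\infty}$ is the compact oriented $(2n-2)$-manifold with boundary of the excerpt, and $\ev\co N:=\MM_{-1,1,\infty}\times\CP^1\to\wtW$ is a degree-one map carrying $\partial N$ diffeomorphically onto $\partial\wtW=\partial P\times\CP^1$.

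Next I would cut $N$ along $S$. For generic $J$ the map $\ev$ is transverse to the separating hypersurface $S\subset\Int(P)\times B_1$, so $\Sigma:=\ev^{-1}(S)$ is a \emph{closed} hypersurface in $N$ --- closed because $\ev(\partial N)\subset\partial P\times\CP^1$ misses $S$ --- separating $N$ into $N_W:=\ev^{-1}(W)$ and $N_C:=\ev^{-1}(C_\infty)$, with $\partial N_W=\Sigma$. Computing the degree of $\ev$ at regular values lying in a collar of $S$ gives that $\ev|_\Sigma\co\Sigma\to S$ has degree one; naturality of the connecting homomorphisms of the pairs $(N_W,\Sigma)$ and $(W,S)$ then upgrades this to: $\ev\co(N_W,\Sigma)\to(W,S)$ is a degree-one map of compact oriented $2n$-manifolds with boundary. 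By the transfer construction --- Lefschetz duality, $\ev^{*}$, and the projection formula --- it follows that $\ev_{*}\co H_{*}(N_W;\F)\to H_{*}(W;\F)$ and $\ev_{*}\co H_{*}(\Sigma;\F)\to H_{*}(S;\F)$ are both surjective, for every field $\F$ (all the spaces involved are orientable, $S$ and $\Sigma$ being separating hypersurfaces in oriented manifolds).

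The heart of the argument is to prove that $H_{*}(\Sigma;\F)\to H_{*}(N_W;\F)$ is surjective, and here I would use the geometry of the cap. By the maximum principle and positivity of intersection with $P_\infty$ (as in Lemma~\ref{lem:spheres}), every sphere $u$ in the class $[F]$ meets $P_\infty$ exactly once, necessarily at the marked point $\infty\in\CP^1$; hence $u^{-1}(C_\infty)$ is a closed $2$-disc in $\CP^1$ containing $\infty$, and on the open locus $\MM^{\circ}\subset\MM_{-1,1,\infty}$ of spheres not entirely contained in $C_\infty$ the complement $u^{-1}(W)$ is the complementary closed $2$-disc. Thus $N_W$ is a $2$-disc bundle over $\MM^{\circ}$, and the complementary $2$-disc bundle inside $N_C$ over $\MM^{\circ}$ carries the constant section $u\mapsto(u,\infty)$, which has trivial normal bundle; so both disc bundles, and with them the boundary circle bundle $\Sigma\to\MM^{\circ}$, have vanishing Euler class. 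Since $N_W$ deformation retracts onto $\MM^{\circ}$, the Gysin sequence of this circle bundle gives the desired surjectivity of $H_{*}(\Sigma;\F)\to H_{*}(N_W;\F)$.

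It remains to assemble the pieces: $H_{*}(\Sigma;\F)\to H_{*}(N_W;\F)\xrightarrow{\ev_{*}}H_{*}(W;\F)$ is a composite of surjections, hence onto, and it coincides with $H_{*}(\Sigma;\F)\xrightarrow{\ev_{*}}H_{*}(S;\F)\xrightarrow{i_{*}}H_{*}(W;\F)$ because $\ev(\Sigma)\subseteq S$; therefore the inclusion-induced map $i_{*}\co H_{*}(S;\F)\to H_{*}(W;\F)$ is surjective, which is the assertion. The delicate part --- and where the homological computations of \cite[Proposition~2.11]{oavi} are really needed --- is the third paragraph: keeping careful track of the manifold-with-boundary structures near the locus of spheres contained in $C_\infty$, where the $W$-disc $u^{-1}(W)$ degenerates, and verifying that the Euler class vanishes, i.e. that gluing the cap $C_\infty$ along $S$ contributes nothing to the homology of $\wtW$ beyond what $S$ already carries. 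Everything else is the soft formalism of degree-one maps between oriented manifolds.
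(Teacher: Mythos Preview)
Your first two paragraphs track the paper's outline correctly. Symplectic asphericity of $(W,\omega)$ does yield compactness of $\MM_{-1,1,\infty}$ --- any non-constant bubble missing $P_\infty$ lies in $W$ by Lemma~\ref{lem:spheres}~(i), hence is a non-constant holomorphic sphere in $W$ with vanishing $\omega$-area, impossible --- and once the degree-one evaluation map is cut transversally along~$S$, the transfer surjections you state for $\ev_*$ on $H_*(N_W)$ and $H_*(\Sigma)$ follow by standard Poincar\'e--Lefschetz duality.

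The genuine gap is in the third paragraph. The maximum principle shows only that $u^{-1}(\Int C_\infty)$ is a \emph{connected} open subset of~$\CP^1$ containing~$\infty$; it does \emph{not} force the complement $u^{-1}(W)$ to be connected. The complement of a connected open set in $\CP^1$ may perfectly well consist of several disjoint closed discs $D_1,\dots,D_m$, and nothing you have written rules out $m\geq 2$. Positivity of intersection with the holomorphic hypersurface $P_\infty$ pins down $u^{-1}(P_\infty)=\{\infty\}$, but says nothing about how many circles comprise $u^{-1}(S)$. Without $m=1$ the projection $N_W\to\MM^\circ$ is not a disc bundle --- the number $m$ need not even be locally constant, since components of $u^{-1}(S)$ can be created or destroyed as $u$ varies --- so there is no circle bundle $\Sigma\to\MM^\circ$ to which your Gysin/Euler-class argument could apply. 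Your normal-bundle computation for the section at~$\infty$ is correct, but it establishes triviality of a bundle that has not been shown to exist. You flag this step as delicate, and rightly so; but the specific mechanism you propose does not survive this objection.

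This is exactly the point at which the paper defers to the homological arguments of \cite[Proposition~2.11]{oavi}, and those proceed differently: rather than cutting $N$ along~$S$ and analysing the resulting pieces fibre by fibre, one exploits the global product structure $N=\MM\times\CP^1$ together with the section $u\mapsto(u,\infty)$ landing in $P_\infty\subset C_\infty$, and argues at the level of the long exact sequences of the pairs $(\wtW,C_\infty)$ and $(W,S)$ linked by excision. The upshot is that the surjectivity of $H_*(S;\F)\to H_*(W;\F)$ is extracted from the surjectivity of $\ev_*$ without ever needing to know that individual preimages $u^{-1}(W)$ are discs.
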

\begin{ack}
We are grateful to Alexandru Oancea for encouraging us with
this project. We thank Peter Albers, Fr\'ed\'eric Bourgeois,
Max D\"orner and Samuel Lisi for useful conversations,
and the referee for numerous perceptive comments and suggestions.
\end{ack}

\end{document}